\documentclass{article}
\usepackage{graphicx} 
\usepackage{amsmath,amssymb,amsthm, epsfig}
\usepackage{color}
\usepackage{stackrel}
\usepackage{hyperref}
\usepackage{mathtools}
\usepackage{mathrsfs}
\usepackage{booktabs}
\usepackage[utf8]{inputenc}
\usepackage{comment}
\usepackage{fullpage}

\title{\Large\bf Bifurcation phenomena in one-phase quasilinear equations with nonstandard growth}
\date{\today}

\author{\it by \smallskip \\ Junior da Silva Bessa \footnote{\noindent Universidade Estadual de Campinas - UNICAMP. Instituto de Matem\'{a}tica, Estat\'{i}stica e Computa\c{c}\~{a}o Cient\'{i}fica - IMECC. Departamento  de Matemática. Bar\~{a}o Geraldo, Campinas - SP, Brazil. \noindent \texttt{E-mail address: \url{jbessa@unicamp.br}}} \,\, and\,\, Alan Pio Sousa 
 \footnote{\noindent  Universidade Federal do Cear\'{a}, Departamento de Matem\'{a}tica, Fortaleza - CE, Brazil \noindent \texttt{E-mail address: \url{alanpio@ufc.br }}}}

\newcommand{\intav}[1]{\mathchoice {\mathop{\vrule width 6pt height 3 pt depth  -2.5pt
\kern -8pt \intop}\nolimits_{\kern -6pt#1}} {\mathop{\vrule width
5pt height 3  pt depth -2.6pt \kern -6pt \intop}\nolimits_{#1}}
{\mathop{\vrule width 5pt height 3 pt depth -2.6pt \kern -6pt
\intop}\nolimits_{#1}} {\mathop{\vrule width 5pt height 3 pt depth
-2.6pt \kern -6pt \intop}\nolimits_{#1}}}
\newcommand{\defeq}{\mathrel{\mathop:}=}

\newtheorem{theorem}{Theorem}[section]

\newtheorem{lemma}[theorem]{Lemma}

\newtheorem{proposition}[theorem]{Proposition}

\newtheorem{corollary}[theorem]{Corollary}

\theoremstyle{definition}

\newtheorem{definition}[theorem]{Definition}

\theoremstyle{remark}

\newtheorem{remark}[theorem]{Remark}

\numberwithin{equation}{section}

\begin{document}
\maketitle
\begin{abstract}
\noindent The bifurcation phenomenon in a singularly perturbed one-phase free boundary problem governed by the g-Laplacian is established under prescribed boundary conditions. This bifurcation is characterized by the existence of a third solution, obtained via the Mountain Pass Lemma, when the boundary data decreases below a suitable threshold. Moreover, we analyze the asymptotic behavior of the associated evolution problem, proving convergence to stable stationary solutions and showing that the Mountain Pass solution is unstable in this dynamical sense.

\medskip
\noindent \textbf{Keywords}: Bifurcation phenomena; Quasilinear operators; One-phase free boundary problem; Evolution problem.
\vspace{0.2cm}
	
\noindent \textbf{AMS Subject Classification:} 35R35, 35J25, 35J62, 35B25, 35B32, 35B51.
\end{abstract}
\section{Introduction}

In this manuscript, we are interested in studying the following functional
\begin{equation}\label{eq1}
\mathcal{J}_{\varepsilon}[u]=\int_{\Omega}G(|\nabla u(x)|)+a(x)\Gamma_{\varepsilon}(u(x)) dx,
\end{equation}
which is a regularization of the Alt-Caffarelli type functional (cf. \cite{AltCaff81}) defined by
\begin{equation}\label{eq2}
\mathcal{J}[u]=\int_{\Omega}G(|\nabla u(x)|)+a(x)\chi_{\{u>0\}} dx.
\end{equation}
Here $\Omega\subset\mathbb{R}^{n}$ is a smooth bounded domain, $G:[0,\infty)\to [0,\infty)$ is a N-function with suitable properties (see the conditions {\bf (L1)-(L2)}), $\Gamma_{\varepsilon}\in C^{\infty}_{0}(\mathbb{R};[0,1])$ such that
\begin{equation*}
\Gamma_{\varepsilon}(s)=\left\{
\begin{array}{ll}
0 \quad \text{if} & s\leq 0 \\
1  \quad \text{if}& s\geq \varepsilon.
\end{array}
\right.
\end{equation*}
The weight function $a$ is assumed to be continuous, positive, and bounded in $\Omega$, and it satisfies $\displaystyle\inf_{x\in\Omega} a(x) > 0$. Observe that the Euler--Lagrange equation associated with \eqref{eq1} is given by
\begin{equation}\label{eq3}
-\Delta_{g}u+a(x)\beta_{\varepsilon}(u)=0\,\,\,\text{in}\,\,\, \Omega,
\end{equation}
where $\beta_{\varepsilon}(s)=\Gamma'_{\varepsilon}(s)$ and $\Delta_{g}$ is a \textit{$g$-Laplacian operator} defined by
$$
\Delta_{g}u:=\operatorname{div}\left(\frac{g(|\nabla u|)}{|\nabla u|}\nabla u\right).
$$
Moreover, we impose the boundary condition for a function $u$,
\begin{equation}\label{eq4}
u_{|_{\partial\Omega}}=\psi,
\end{equation}
for some function $\psi\in C^{1,\alpha}(\overline{\Omega})\cap W^{1,G}(\Omega)$ (see the definition of this functional spaces below). 

The interest in studying problem \eqref{eq1}, also known as a \textit{singular perturbation problem}, stems from the regularization procedure of the functional \eqref{eq2}, which yields, for each $\varepsilon>0$, a solution $u_{\varepsilon}\geq 0$ to \eqref{eq3}. As proved in \cite{MarWol09} (see also \cite{BraCarMor23,BraSou25}), the family of solutions $(u_{\varepsilon})_{\varepsilon>0}$ is precompact, and thus, up to a subsequence, it converges to a function $u$ that is a weak solution of the free boundary problem 
\begin{equation*}
\left\{
\begin{aligned}
\Delta_{g}u&= 0 && \text{in }  \{u>0\}\cap \Omega,\\
|\nabla u| &= \lambda^{*} && \text{on } \partial\{u>0\}\cap \Omega,
\end{aligned}
\right.
\end{equation*}
for some constant $\lambda^{*}$ depending on $g$ and $\mathrm{M}=\int_{0}^{1}\beta_{1}(s)ds$. In this case, the model arises naturally in combustion theory, specifically in flame propagation phenomena (cf. \cite{BerLar91,BerLArRoq89}).

Taking into account the above class of singular perturbation models and their connections with combustion theory, an interesting question concerns the existence and multiplicity of solutions associated with the corresponding nonlinear elliptic problems. We begin by recalling the seminal work of Caffarelli and Wang \cite{CaffWang15}, who proved a bifurcation phenomenon for the quadratic Alt--Caffarelli functional, obtaining a third mountain-pass type solution and analyzing the stability of the associated evolution problem.

Later, Ali and Wang \cite{HajWang19} (see also  \cite{LunCar23}) extended these results to the $p$-growth case $G(t)=\frac{t^{p}}{p}$, $1<p<\infty$, obtaining an analogous bifurcation phenomenon as well as convergence results for the associated evolution problem. In the singular regime $1<p<2$, they additionally assumed the dimensional restriction $n<\frac{p}{2-p}$.

Along these lines, we also mention the work of Charro et al. \cite{Charr}, who investigated a bifurcation phenomenon for a two-phase singularly perturbed free boundary problem governed by the Laplacian, arising in phase transition theory. They proved that uniqueness breaks down when the boundary data falls below a critical threshold value, and moreover established the existence of at least three distinct solutions in this regime, including a third mountain-pass type solution.

In view of the scarce literature on this problem, our work aims to establish a bifurcation phenomenon for \eqref{eq3}--\eqref{eq4}, governed by the magnitude of the boundary datum $\psi$. More precisely, we prove that if $\psi$ lies below a suitable threshold, then the problem has three distinct solutions: the trivial $g$-harmonic configuration, a global minimizer, and a saddle-type solution obtained via a Mountain Pass argument.

Throughout the paper we assume that the boundary datum satisfies
\[
\min_{\partial\Omega}\psi>\varepsilon>0,
\]
which guarantees that the trivial solution remains strictly above the activation region of the penalization term.

Furthermore, in the final part of this manuscript, we investigate the associated evolution problem
\begin{equation}\label{problemadeevoulucao}
\left\{
\begin{aligned}
w_{t}-\Delta_{g}w+a(x)\beta_{\varepsilon}(w)&= 0 && \text{in }  \Omega_{\infty}\coloneqq \Omega\times (0,+\infty),\\
    w(x,t) &= \psi && \text{on } S_{\infty}\coloneqq \partial \Omega\times (0,+\infty),\\
w(x,0)&= v_{0}(x)&& \text{for } x\in \overline{\Omega},
\end{aligned}
\right.
\end{equation}
where $v_{0}$ is a continuous function on $\overline{\Omega}$. In this setting, we prove the stability of both the minimizer and the trivial $g$-harmonic configuration, whereas the Mountain Pass solution turns out to be unstable.

Our governing operator, the $g$-Laplacian, can display either degenerate elliptic or singular behavior and, unlike the $p$-Laplacian, it is not necessarily homogeneous. This loss of homogeneity introduces extra challenges compared to the $p$-Laplacian setting and necessitates new arguments at several stages of the analysis. More specifically, we postulate that the function $g$ has different asymptotic profiles near $0$ and at infinity, a phenomenon absent in the $p$-Laplacian framework. To treat this general situation, we employ throughout the paper the structural hypotheses proposed by Lieberman \cite{Lieber}, which offer a natural setting for quasilinear problems with nonstandard growth (see Section \ref{Section2}).

The manuscript is organized as follows. In Section~\ref{Section2} we collect preliminary results concerning $N$-functions, Orlicz--Sobolev spaces, and the $g$-Laplacian operator. In Section~\ref{Section3} we establish the bifurcation phenomenon and obtain a third solution via the Mountain Pass Lemma. In Section~\ref{Section4} we prove a parabolic comparison principle, and in Section~\ref{Section5} we investigate the asymptotic behavior of the associated evolution problem.

\section{Preliminaries}\label{Section2}

We present in this section a collection of background results related to the N-functions and the  $g$-Laplacian operator.

First, we are adopt the conditions introduced by Lieberman in \cite{Lieber} for the study of elliptic models that may exhibit degenerate and/or singular behavior. Specifically, $G$ is a N-function satisfying the so-called \textit{Lieberman conditions} (cf. \cite{Lieber}):
\begin{itemize}
\item[{\rm (\textbf{L1})}]{\bf (Primitive condition)} The N-function $G$ satisfies 
$$
G'(t)=g(t),\quad \text{for} \quad g\in C^{0}([0,\infty))\cap C^{1}((0,\infty)).
$$ 
\item[{\rm(\textbf{L2})}] {\bf (Quotient condition)} There exist constants $0<\delta_0\le g_0$ such that
\[
\delta_0 \le \frac{t g'(t)}{g(t)} \le g_0 \qquad\forall\,t>0.
\]
\end{itemize}

These conditions provide the natural framework to deal with this class of nonstandard growth problems. As an illustrative example, if $G(t)=t^{p}$ for $1<p<\infty$, then $\delta_{0}=g_{0}=p-1$, respectively. Moreover, note that if $\delta_{0}=g_{0}$ in the condition {\rm (\textbf{L2})} then $g$ is a power function.

As illustrative examples, the following functions belong to the class considered in this article: 
\begin{itemize}
\item[(a)] \textbf{Double-phase type:}
\[
G(t)=\frac{t^{p}}{p}+\frac{t^{q}}{q}, \qquad g(t)=t^{p-1}+t^{q-1},
\qquad 1<p\le q<\infty.
\]
In this case, $\delta_{0}=p-1$ and $g_{0}=q-1$
\item[(b)] \textbf{Log-perturbed $p$-growth:}
\[
g(t)=t^{p-1}\bigl(1+\log(1+t)\bigr),\ 1<p<\infty.
\]
In this case, $\delta_{0}=p$ and $g_{0}=p+1$.
\end{itemize}
Furthermore, any positive linear combination of functions satisfying \textbf{(L2)} also satisfies \textbf{(L2)}. More precisely, if $g_{1}$ and $g_{2}$ satisfy \textbf{(L2)} with constants $(\delta_{i},g_{0,i})$, $i=1,2$, then for any $\alpha,\beta>0$ the function $g(t)=\alpha g_{1}(t)+\beta g_{2}(t)$ satisfies \textbf{(L2)} as well. Moreover, the product $g=g_{1}g_{2}$ satisfies \textbf{(L2)} with $\delta=\delta_{1}+\delta_{2}$ and $g_{0}=g_{0,1}+g_{0,2}$, while the composition $g(t)=g_{1}(g_{2}(t))$ satisfies \textbf{(L2)} with $\delta=\delta_{1}\delta_{2}$ and $g_{0}=g_{0,1}g_{0,2}$, thus highlighting the wide range of models encompassed by \eqref{eq2}, not only those exhibiting $p$-growth as in the classical examples mentioned above.

Under these assumptions, we have some basic properties that will be used throughout the paper, which are summarized in the following result.
\begin{proposition}
Let $G$ be a N-function that satisfies {\bf (L1)-(L2)}. Then, the function $g$ satisfies the following properties:

\begin{itemize}
\item[(g1)] $\min\{s^\delta, s^{g_0}\}g(t) \leq g(st) \leq \max\{s^\delta, s^{g_0}\}g(t),$
\item[(g2)] $G$ is convex and $C^2,$
\item[(g3)] $\dfrac{t g(t)}{1 + g_0} \leq G(t) \leq t g(t) \quad \forall t > 0.$
\item[(G1)] $\min\{s^{\delta+1}, s^{g_0+1}\} \frac{G(t)}{1 + g_0} \leq G(st) \leq (1 + g_0)\max\{s^{\delta+1}, s^{g_0+1}\}G(t)$.
\item[(G2)] $G(a + b) \leq 2^{g_0}(1 + g_0)\big(G(a) + G(b)\big), \quad \forall a,b > 0.$
\end{itemize}
\end{proposition}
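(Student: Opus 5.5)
The plan is to derive everything from the logarithmic form of {\bf (L2)}, with $\delta=\delta_0$ throughout. For (g1), fix $t>0$ and integrate $\delta_0/\tau\le g'(\tau)/g(\tau)\le g_0/\tau$ between $t$ and $st$. For $s\ge1$ this gives
\[
s^{\delta_0}\le \frac{g(st)}{g(t)}\le s^{g_0}.
\]
For $s\in(0,1)$ the same integration, done over $[st,t]$, reverses the roles and gives $s^{g_0}\le g(st)/g(t)\le s^{\delta_0}$. Together the two cases are exactly (g1). Note also that (L2) forces $g'>0$ on $(0,\infty)$, and by (g1) $g(st)\to0$ as $s\to0$, so $g(0)=0$. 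Hence $G(t)=\int_0^t g$.

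For (g2), $G'=g$ is continuous and increasing, so $G$ is convex. Since $G''=g'$ is continuous on $(0,\infty)$, $G$ is $C^2$ there, which is the sense of $C^2$ intended here (at $0$ one only has $C^1$ in general). For (g3), monotonicity of $g$ gives $G(t)=\int_0^t g\le tg(t)$. For the lower bound, $(tg(t))'=g+tg'\le(1+g_0)g$ by (L2). Integrating from $0$ to $t$, and using $\tau g(\tau)\to0$ as $\tau\to 0$, yields $tg(t)\le(1+g_0)G(t)$.

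For (G1), write $G(st)=\int_0^{st}g=s\int_0^t g(s\tau)\,d\tau$. Apply (g1) inside the integral to get
\[
\min\{s^{\delta_0+1},s^{g_0+1}\}\,G(t)\le G(st)\le \max\{s^{\delta_0+1},s^{g_0+1}\}\,G(t).
\]
This is stronger than the stated inequality, since $1+g_0\ge1$. If one wants to match the paper's constants exactly, one can instead combine (g3) with (g1) via $G(st)\le st\,g(st)$ and $tg(t)\le(1+g_0)G(t)$, which reproduces the factor $(1+g_0)$ on both sides.

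For (G2), use monotonicity: $G(a+b)\le G(2\max\{a,b\})$. Apply (G1) with $s=2$ to get the bound $(1+g_0)2^{g_0+1}G(\max\{a,b\})$. There is an extra factor $2$ compared with the stated constant. To remove it, I would use convexity instead: $G(a+b)=G\bigl(\tfrac12(2a)+\tfrac12(2b)\bigr)\le\tfrac12\bigl(G(2a)+G(2b)\bigr)$. Then (G1) with $s=2$ gives $G(a+b)\le 2^{g_0}(G(a)+G(b))$, which is even better than claimed.

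The only delicate points are the case split $s\lessgtr1$ in (g1) and the boundary behaviour at $0$ needed to integrate in (g3). Everything else is a direct integration.
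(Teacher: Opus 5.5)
Your proposal is correct and follows essentially the paper's route. The paper cites Lieberman for (g1)--(g3), which you prove by the standard integration of (L2); it then derives (G1) from (g1) and (g3) and obtains (G2) from convexity together with (G1) at $s=2$, exactly as in your final argument. Your substitution $G(st)=s\int_0^t g(s\tau)\,d\tau$ is a small improvement that removes the factor $1+g_0$ from (G1) and hence from (G2), and your remark that $G$ is in general only $C^2$ on $(0,\infty)$ (e.g.\ $G(t)=t^p/p$ with $1<p<2$) is a correct refinement of (g2).
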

\begin{proof}
For the proofs of (g1)--(g3), see \cite{Lieber}. Conditions (g1) and (g3) yield a corresponding inequality for $G$, which implies (G1). Furthermore, by the convexity of $G$ and this inequality, we deduce (G2).
\end{proof}

Associated with the $N$-function $G$, we consider the Orlicz and Orlicz--Sobolev spaces.

\begin{definition}
Given an $N$-function $G$, we define the Orlicz space $L^{G}(\Omega)$ as the set of measurable functions $h$ in $\Omega$ such that
\[
\rho_{G}(h)=\int_{\Omega}G(|h(x)|)\,dx<\infty,
\]
where we adopt the Luxemburg norm
\[
\|h\|_{L^{G}(\Omega)}\defeq\inf\left\{\lambda>0:\rho_{G}\left(\frac{h}{\lambda}\right)\leq 1\right\}.
\]
Associated with the Orlicz space, we define the Orlicz--Sobolev space $W^{1,G}(\Omega)$ as the set of all measurable functions $h$ such that $h$ and all its distributional derivatives $D_{i}h$, $i=1,\ldots,n$, belong to $L^{G}(\Omega)$. In this case, we endow $W^{1,G}(\Omega)$ with the norm
\[
\|h\|_{W^{1,G}(\Omega)}=\|h\|_{L^{G}(\Omega)}+\|\nabla h\|_{L^{G}(\Omega)}.
\]
\end{definition}

\begin{remark}
We observe that, under assumptions {\bf (L1)--(L2)}, $L^{G}(\Omega)$ and $W^{1,G}(\Omega)$ are reflexive Banach spaces (see for instance \cite{HarHas19}).
\end{remark}

The first property of the Orlicz spaces $L^{G}(\Omega)$ that we highlight here is the control of the norm by the modular $\rho_{G}$.

\begin{lemma}\label{behaviornorm}
If \(u\in L^{G}(\Omega)\), then there exists a positive constant \(\mathrm{C}=\mathrm{C}(\delta_{0},g_{0})\) such that 
\[
||u||_{L^G(\Omega)}\leq \mathrm{C}\max\left\{\left(\int_{\Omega}{G}(|u|)dx\right)^{\frac{1}{1+\delta_{0}}},\left(\int_{\Omega}{G}(|u|)dx\right)^{\frac{1}{1+g_{0}}}\right\}.
\]
\end{lemma}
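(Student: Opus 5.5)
Write $\rho=\rho_{G}(u)=\int_{\Omega}G(|u|)\,dx$. We may assume $u\neq 0$, so that $\rho>0$ (if $u=0$ a.e.\ both sides vanish). The idea is to use the lower scaling bound in (G1) to exhibit an explicit admissible $\lambda$ in the definition of the Luxemburg norm. From (G1), for $s\in(0,1]$ we have $\min\{s^{\delta_0+1},s^{g_0+1}\}=s^{g_0+1}$, and for $s\ge 1$ it equals $s^{\delta_0+1}$. Applying (G1) with $t=|u(x)|/\lambda$ and $s=\lambda$ gives
\[
G(|u|/\lambda)\le \frac{(1+g_0)\,G(|u|)}{\min\{\lambda^{\delta_0+1},\lambda^{g_0+1}\}}.
\]
Integrating over $\Omega$ yields
\[
\rho_G(u/\lambda)\le \frac{(1+g_0)\,\rho}{\min\{\lambda^{\delta_0+1},\lambda^{g_0+1}\}}.
\]

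Next choose
\[
\lambda_0=\max\Big\{\big((1+g_0)\rho\big)^{\frac{1}{1+\delta_0}},\big((1+g_0)\rho\big)^{\frac{1}{1+g_0}}\Big\}.
\]
The claim is that $\lambda_0^{\delta_0+1}\ge (1+g_0)\rho$ and $\lambda_0^{g_0+1}\ge (1+g_0)\rho$ simultaneously. This is checked by splitting into the cases $(1+g_0)\rho\ge1$ and $(1+g_0)\rho<1$.

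\begin{itemize}
\item If $A:=(1+g_0)\rho\ge 1$, then $\lambda_0=A^{1/(1+\delta_0)}\ge1$. Hence $\lambda_0^{g_0+1}\ge\lambda_0^{\delta_0+1}=A$.
\item If $A<1$, then $\lambda_0=A^{1/(1+g_0)}<1$. Hence $\lambda_0^{\delta_0+1}\ge \lambda_0^{g_0+1}=A$.
\end{itemize}

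In either case $\rho_G(u/\lambda_0)\le 1$, so $\|u\|_{L^G(\Omega)}\le\lambda_0$ by definition of the infimum. Finally, pull out the constant: $A^{1/(1+\delta_0)}=(1+g_0)^{1/(1+\delta_0)}\rho^{1/(1+\delta_0)}$, and similarly for the other exponent. Both factors are bounded by $C=(1+g_0)^{1/(1+\delta_0)}$, since $1+g_0\ge1$ and $1/(1+\delta_0)\ge 1/(1+g_0)$. This gives the stated inequality with $C=C(\delta_0,g_0)$.

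The only delicate point is the bookkeeping of which branch of $\min\{s^{\delta_0+1},s^{g_0+1}\}$ is active relative to $\lambda_0$ being above or below $1$; the case split above handles it.
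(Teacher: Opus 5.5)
Your proof is correct. The paper gives no argument of its own for this lemma; it only cites \cite[Lemma 2.3]{MarWol08}. You instead give a self-contained proof from the lower bound in (G1), and your case split on $A=(1+g_0)\rho\ge 1$ versus $A<1$ correctly shows $\min\{\lambda_0^{1+\delta_0},\lambda_0^{1+g_0}\}\ge A$.

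Exhibiting an explicit admissible level $\lambda_0$ has an advantage over the other common route, which evaluates the modular at $\lambda=\|u\|_{L^G(\Omega)}$. That route needs $\rho_G(u/\|u\|_{L^G(\Omega)})=1$, which relies on the $\Delta_2$ condition and continuity in $\lambda$. You use only the definition of the Luxemburg norm as an infimum.

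As a side remark, (L2) gives the sharper bound $(1+\delta_0)G(t)\le tg(t)\le (1+g_0)G(t)$. Hence (G1) holds without the factors $1+g_0$, and your argument then yields the inequality with $\mathrm{C}=1$.
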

\begin{proof}
See \cite[Lemma 2.3]{MarWol08}
\end{proof}

Moreover, the Orlicz--Sobolev space and the trace operator satisfy the following properties.

\begin{theorem}\label{embeddings}
Under the previous conditions, we have that:
\begin{itemize}
\item[(i)] $L^{G}(\Omega)\hookrightarrow L^{1+\delta_{0}}(\Omega)$ continuously;
\item[(ii)] There exists a constant $r\geq1$ depending only on $n$ and $\delta_0$ such that the trace operator $T:W^{1,1+\delta_0}(\Omega)\to L^{r}(\partial\Omega )$ is compact.
\end{itemize}
\end{theorem}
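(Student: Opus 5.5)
Part (i) reduces to a pointwise comparison between $G$ and a power. From (G1) with $t=1$, for every $s\ge 1$ we have
\[
G(s)\ge \frac{G(1)}{1+g_0}\,s^{1+\delta_0}.
\]
Hence $s^{1+\delta_0}\le C\,G(s)+1$ for all $s\ge 0$, where $C=(1+g_0)/G(1)$. The constant $1$ handles $s<1$, where $s^{1+\delta_0}\le 1$.

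Take $h\in L^G(\Omega)$ and set $\lambda=\|h\|_{L^G(\Omega)}$. We may assume $\lambda>0$. By the definition of the Luxemburg norm and lower semicontinuity (Fatou), $\rho_G(h/\lambda)\le 1$. Integrating the pointwise bound applied to $|h|/\lambda$ gives
\[
\int_\Omega \frac{|h|^{1+\delta_0}}{\lambda^{1+\delta_0}}\,dx\le C+|\Omega|.
\]
Therefore
\[
\|h\|_{L^{1+\delta_0}(\Omega)}\le (C+|\Omega|)^{1/(1+\delta_0)}\,\|h\|_{L^G(\Omega)},
\]
which is the continuous embedding. Boundedness of $\Omega$ is used exactly once, to keep $|\Omega|$ finite.

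For part (ii), set $p=1+\delta_0>1$. The plan is to rely on the classical compact trace theorem for $W^{1,p}$ on smooth bounded domains.

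If $p<n$, the trace maps $W^{1,p}(\Omega)$ continuously into $L^{q}(\partial\Omega)$ for $q\le p(n-1)/(n-p)$, and compactly for $q$ strictly below this critical exponent. I would take $r=p$. Indeed $p<p(n-1)/(n-p)$, because this inequality is equivalent to $n-p<n-1$, i.e. $p>1$. This choice also gives $r\ge 1$.

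If $p\ge n$, then $W^{1,p}(\Omega)$ embeds into $W^{1,s}(\Omega)$ for every $s<n$. Composing with the previous case, the trace is compact into $L^{q}(\partial\Omega)$ for every finite $q$, so again $r=p$ works. In all cases $r$ depends only on $n$ and $\delta_0$.

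The only genuine input is the compactness of the trace. I would justify it by the standard argument: flatten the boundary with finitely many charts and a partition of unity. Then control $\|u\|_{L^q(\partial\Omega)}$ by
\[
\varepsilon\|\nabla u\|_{L^p(\Omega)}+C_\varepsilon\|u\|_{L^{p}(\Omega)}.
\]
Combined with the compactness of the Rellich--Kondrachov embedding $W^{1,p}\hookrightarrow L^p$, this yields compactness of the trace. This is the step I expect to need the most care. Since it is textbook material, I would cite it (e.g. Nečas, or Adams--Fournier) rather than reprove it.
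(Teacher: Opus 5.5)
Your proposal is correct and follows the paper's route. The paper proves (i) by citing Lemma 2.1 and Theorem 2.2 of Mart\'{\i}nez--Wolanski, and your argument is the standard proof behind that citation: the bound $G(s)\ge \frac{G(1)}{1+g_0}s^{1+\delta_0}$ for $s\ge 1$ from (G1), plus the normalization $\rho_G(h/\|h\|_{L^G})\le 1$. For (ii) the paper cites Theorem 6.2 of Ne\v{c}as, the same classical compact trace theorem you rely on.

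Your choice $r=1+\delta_0$ is admissible because $p<p(n-1)/(n-p)$ whenever $p>1$. The split into $p<n$ and $p\ge n$ is not needed: for every $p>1$, the bound $\|u\|_{L^p(\partial\Omega)}^p\le \epsilon\|\nabla u\|_{L^p(\Omega)}^p+C_\epsilon\|u\|_{L^p(\Omega)}^p$ together with Rellich--Kondrachov already gives compactness.
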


\begin{proof}
See Lemma 2.1 and Theorem 2.2 in \cite{MarWol08} for (i), and Theorem 6.2 in \cite{Necas} for (ii).
\end{proof}

Finally, in the next sections, we will need a Poincar\'e-type inequality in the framework of Orlicz--Sobolev spaces, which is summarized in the following result.
\begin{lemma}[\bf Poincar\'e-type inequality]\label{poincareinequality}
Let \(u\in W^{1,G}(\Omega)\) such that \(u=0\) on \(\partial \Omega\). Then, there exists a positive constant $\mathrm{C}$ depending only on $\operatorname{diam}(\Omega)$ such that 
\[
\int_{\Omega}{G}(|u|)\leq \int_{\Omega}G(\mathrm{C}|\nabla u|).
\]
\end{lemma}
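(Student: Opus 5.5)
The plan is to reduce the statement to the classical one-dimensional argument along coordinate lines, and then use the convexity of $G$ (property (g2)) through Jensen's inequality to move $G$ inside the line integral. By density it suffices to treat $u\in C^{1}_{c}(\Omega)$, extended by zero outside $\Omega$. Such functions are dense in the subspace of $W^{1,G}(\Omega)$ with zero trace, because $G$ satisfies the $\Delta_2$ condition (G1). The passage to the limit in both modulars is then justified by Fatou's lemma on the left and by modular continuity, again from $\Delta_2$, on the right. Let $d=\operatorname{diam}(\Omega)$. After a translation, $\Omega\subset\{x: 0<x_1<d\}$.

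For $x\in\Omega$ we write
\[
u(x)=\int_{0}^{x_1}\partial_1 u(s,x')\,ds,\qquad\text{so}\qquad |u(x)|\le \int_0^{d}|\nabla u(s,x')|\,ds=\frac1d\int_0^d d\,|\nabla u(s,x')|\,ds .
\]
Since $G$ is increasing and convex, Jensen's inequality with respect to the probability measure $ds/d$ on $(0,d)$ gives
\[
G(|u(x)|)\le \frac1d\int_0^d G\big(d\,|\nabla u(s,x')|\big)\,ds .
\]
We then integrate in $x_1\in(0,d)$, which cancels the factor $1/d$ because the right-hand side does not depend on $x_1$. Integrating afterwards in $x'$ and applying Fubini yields
\[
\int_{\Omega}G(|u|)\,dx\le\int_{\mathbb{R}^n}G\big(d|\nabla u|\big)\,dx=\int_\Omega G(\mathrm{C}|\nabla u|)\,dx,
\]
with $\mathrm{C}=d=\operatorname{diam}(\Omega)$. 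The last equality holds because $\nabla u=0$ outside $\Omega$ and $G(0)=0$.

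The main obstacle I expect is the density/approximation step. The computation needs a genuine zero extension, so that the line integral starts at a point where $u=0$, and this requires identifying ``$u=0$ on $\partial\Omega$'' with membership in the closure of $C_c^\infty(\Omega)$. For a smooth domain this holds, since (L2) gives $\Delta_2$ for both $G$ and its conjugate, so smooth functions are dense in modular and norm. If that identification is taken as the definition of zero boundary values, the argument above is complete.
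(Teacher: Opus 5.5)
Your proof is correct. The paper gives no argument of its own here and simply invokes Lemma 2.4 of \cite{MarWol08}, which is, as far as I recall, proved by this same one-dimensional slicing plus Jensen argument, so your route is essentially the cited one. Your $\Delta_2$-based density and modular-continuity step is sound, and projecting onto a slab of width $\operatorname{diam}(\Omega)$ gives the admissible constant $\mathrm{C}=\operatorname{diam}(\Omega)$.
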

\begin{proof}
See \cite[Lemma 2.4]{MarWol08}.
\end{proof}


\section{Multiplicity of the solutions: The bifurcation phenomena}\label{Section3}

In this section, we establish the existence of multiple solutions to the boundary value problem
\begin{equation}\label{eq5}
\left\{
\begin{aligned}
-\Delta_{g}u+a(x)\beta_{\varepsilon}(u)&= 0 && \text{in } \ \  \Omega \\
  u &= \psi && \text{on } \ \partial \Omega.
\end{aligned}
\right.
\end{equation}
It is worth emphasizing that any weak solution to the above problem is bounded by a constant depending only on $\Omega$, $g_0$, $\delta_0$, $g(1)$, $\|a\|_{\infty}$, and $\|\beta_\epsilon\|_{\infty}$, as established in \cite{Zheng}.
Initially, we consider the \textit{trivial solution} \(u_{0}\), obtained from the corresponding Dirichlet problem
\begin{equation}\label{max}
\left\{
\begin{aligned}
  -\Delta_{g} u &= 0 && \text{in } \Omega, \\
  u &= \psi && \text{on } \partial \Omega.
\end{aligned}
\right.
\end{equation}
By Theorem 1.1 in \cite{BraSou25}, we know that $u_0$ has $C^{1,\alpha}$ regularity up to the boundary.
Applying the Comparison Principle for  $g$-harmonic functions (cf. \cite[Theorem 2.2]{Braga1}) to the functions $u_0$, which is a solution of problem~\eqref{max}, and $v \equiv \varepsilon$, we obtain that $u_0 \geq \varepsilon$ in $\Omega$.
 Hence, for all $\phi\in C^{\infty}_{0}(\Omega)$ we have that
\begin{equation}
\int_{\Omega}\frac{g(|\nabla u_{0}|)}{|\nabla u_{0}|}\nabla u_{0}\cdot \nabla \phi +a(x)\beta_{\varepsilon}(u_{0})\varphi dx=0,
\end{equation}
because $\beta_{\varepsilon}(u_{0})=0$. Thus, $u_{0}$ is a solution to boundary value problem \eqref{eq5}.

Now we show the existence of a solution to the problem \eqref{eq5} via minimization of the functional \eqref{eq1}.
\begin{theorem}[\bf Minimizing Solution]\label{minfun}
There exists a minimizer for the functional $\mathcal{J}_{\varepsilon}$ on the admissible class
$$
\mathcal{K}_{\varepsilon}=\{v\in W^{1,G}(\Omega):\ v-\psi\in W^{1,G}_{0}(\Omega)\}.
$$
\end{theorem}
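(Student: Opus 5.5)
We will use the direct method of the calculus of variations on $\mathcal{K}_{\varepsilon}$, which is nonempty because $\psi\in\mathcal{K}_{\varepsilon}$. Since $0\le a(x)\Gamma_{\varepsilon}(u)\le \|a\|_{\infty}$ and $G\ge 0$, we have
\[
0\le \mathcal{J}_{\varepsilon}[v]\quad\text{and}\quad \mathcal{J}_{\varepsilon}[\psi]<\infty,
\]
so $m:=\inf_{\mathcal{K}_{\varepsilon}}\mathcal{J}_{\varepsilon}\in[0,\infty)$. We then take a minimizing sequence $(v_k)\subset\mathcal{K}_{\varepsilon}$, assume without loss of generality that $\mathcal{J}_{\varepsilon}[v_k]\le m+1$, and conclude that $\int_\Omega G(|\nabla v_k|)\le m+1$ uniformly in $k$.

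The first and main step is coercivity: we must show that $(v_k)$ is bounded in $W^{1,G}(\Omega)$. Write $w_k=v_k-\psi\in W^{1,G}_0(\Omega)$. By (G2),
\[
\int_\Omega G(|\nabla w_k|)\le 2^{g_0}(1+g_0)\Big(\int_\Omega G(|\nabla v_k|)+\int_\Omega G(|\nabla \psi|)\Big)\le C.
\]
Lemma \ref{poincareinequality} combined with (G1) (to pull the constant $\mathrm{C}$ out of $G(\mathrm{C}|\nabla w_k|)$ at the cost of the factor $(1+g_0)\max\{\mathrm{C}^{1+\delta_0},\mathrm{C}^{1+g_0}\}$) then bounds $\int_\Omega G(|w_k|)$ uniformly. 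Lemma \ref{behaviornorm} converts these modular bounds into norm bounds for $\|w_k\|_{L^G}$ and $\|\nabla w_k\|_{L^G}$. Adding $\psi$ back, we obtain that $(v_k)$ is bounded in $W^{1,G}(\Omega)$. The obstacle is that the modular is not homogeneous, so this passage from modular to norm has to go through (G1) and Lemma \ref{behaviornorm} rather than a simple scaling argument.

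For compactness, we use that $W^{1,G}(\Omega)$ is reflexive (as noted in the Remark), so a subsequence satisfies $v_k\rightharpoonup u$ weakly in $W^{1,G}(\Omega)$. Since $W^{1,G}_0(\Omega)$ is a closed convex subspace, it is weakly closed; hence $u-\psi\in W^{1,G}_0(\Omega)$ and $u\in\mathcal{K}_{\varepsilon}$. By Theorem \ref{embeddings}(i), $W^{1,G}(\Omega)\hookrightarrow W^{1,1+\delta_0}(\Omega)$, and Rellich--Kondrachov gives $v_k\to u$ strongly in $L^{1+\delta_0}(\Omega)$. Passing to a further subsequence, $v_k\to u$ almost everywhere.

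It remains to prove lower semicontinuity. The map $v\mapsto\int_\Omega G(|\nabla v|)$ is convex by (g2) and strongly continuous, or at least strongly lower semicontinuous by Fatou's lemma. A convex, strongly lower semicontinuous functional is weakly lower semicontinuous, which gives
\[
\int_\Omega G(|\nabla u|)\le\liminf_{k}\int_\Omega G(|\nabla v_k|).
\]
For the penalization term, $\Gamma_{\varepsilon}$ is continuous and bounded and $a$ is bounded, so the a.e. convergence together with dominated convergence yields $\int_\Omega a\Gamma_{\varepsilon}(v_k)\to\int_\Omega a\Gamma_{\varepsilon}(u)$. Combining the two, $\mathcal{J}_{\varepsilon}[u]\le\liminf_k \mathcal{J}_{\varepsilon}[v_k]=m$, and since $u\in\mathcal{K}_{\varepsilon}$, $u$ is a minimizer.
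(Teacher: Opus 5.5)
Your proposal is correct and follows essentially the same direct-method argument as the paper: a minimizing sequence, coercivity via the Poincar\'e-type inequality, weak compactness in $W^{1,G}(\Omega)$, a.e.\ convergence through the embedding into $W^{1,1+\delta_0}(\Omega)$, weak lower semicontinuity of the gradient term, and dominated convergence for the penalization term. You are more careful than the paper in converting modular bounds into norm bounds via (G1), (G2) and Lemma~\ref{behaviornorm}, and you obtain $u\in\mathcal{K}_{\varepsilon}$ from the weak closedness of $\psi+W^{1,G}_0(\Omega)$ rather than through the trace, but these are refinements, not a different route.
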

\begin{proof}
The proof of this result is standard. However, we include it here for the sake of completeness. First, note that $\mathcal{K}_{\varepsilon}\neq \emptyset$ because $\psi\in \mathcal{K}_{\varepsilon}$. Now, we can take a minimizing sequence $(u_{k})_{k\in\mathbb{N}}$ in $\mathcal{K}_{\varepsilon}$, i. e., the sequence satisfy
$$
\mathcal{J}_{\varepsilon}[u_{k}]\to \iota_{0}=\inf_{u\in\mathcal{K}_{\varepsilon}}\mathcal{J}_{\varepsilon}[u]\,\,\, \text{as}\,\,\, k\to \infty.
$$
So, the sequences $\int_{\Omega}G(|\nabla u_{k}|)$ and $\int_{\Omega}a(x)\Gamma_{\varepsilon}(u_{k})$ are bounded. Consequently,by the Poincaré-type inequality \ref{poincareinequality},  the sequence $(u_{k}-\psi)_{k\geq 1}$ is bounded in $W^{1,G}(\Omega)$. Thus, there exists a function $u_{\infty}\in W^{1,G}(\Omega)$ such that, up to a subsequence, $u_{k}\rightharpoonup u_{\infty}$ in this space. Moreover, by the Theorem \ref{embeddings} and the continuous embedding $W^{1,1+\delta_{0}}(\Omega)\hookrightarrow L^{1+\delta_{0}}(\Omega)$ we have that, up to a subsequence, $u_{k}\to u_{\infty}$ almost everywhere in $\Omega$ and $u_{\infty}=\psi$ on $\partial \Omega$. Thererfore, $u_{\infty}\in \mathcal{K}_{\varepsilon}$. Now, we analyzed the functional $\mathcal{J}_{\varepsilon}$ term by term:
\begin{itemize}
\item[\checkmark] For the sequence $\int_{\Omega}G(|\nabla u_{k}|)dx$, as G is a N-function by the weak convergence $u_{k}\rightharpoonup u_{\infty}$ in $W^{1,G}(\Omega)$ its follows that
$$
\int_{\Omega}G(|\nabla u_{\infty}|)dx\leq \liminf_{k\to \infty}\int_{\Omega}G(|\nabla u_{k}|)dx.
$$
\item[\checkmark] Now, for the sequence $\int_{\Omega}a(x)\Gamma_{\varepsilon}(u_{k})dx$, note that $\Gamma_{\varepsilon}(u_{k})\to \Gamma_{\varepsilon}(u_{\infty})$ a.e. in $\Omega$ and by $\Gamma_{\varepsilon}$ and $a$ are bounded functions, we can apply the Lebesgue’s Dominated Convergence Theorem and conclude that
$$
\int_{\Omega}a(x)\Gamma_{\varepsilon}(u_{k})dx\to \int_{\Omega}a(x)\Gamma_{\varepsilon}(u_{\infty})dx.
$$
\end{itemize}
Thus, $u_{\infty}$ satisfies
$$
\mathcal{J}_{\varepsilon}[u_{\infty}]\leq \liminf_{k\to \infty}\mathcal{J}_{\varepsilon}[u_{k}],
$$
as desired.
\end{proof}
\begin{remark}
From now on, we denote by $u_{2}$ the minimizer of $\mathcal{J}_{\varepsilon}$ in $\mathcal{K}_{\varepsilon}$ given by Theorem \ref{minfun}.     
\end{remark}
Now we show that, if the boundary data is sufficiently small, then the minimizer $u_{2}$ is different from the trivial solution $u_{0}$, thus giving rise to the \textit{bifurcation phenomenon}. Specifically, we adopt the following notation
$$
\mathrm{m}_{\psi}=\min_{\partial \Omega}\psi \,\,\, \text{and}\,\,\, \mathrm{M}_{\psi}=\max_{\partial\Omega}\psi.
$$
Our goal is to show that, under suitable assumptions on the boundary data $\psi$, one has $u_{2}\neq u_{0}$. Consider $u\in W^{1,G}(\Omega)$ a weak solution of the problem 
\begin{equation}\label{prob}
\left\{
\begin{aligned}
  u &= 0 && \text{on } \partial \Omega_\delta\\
    u &= \psi && \text{on } \partial \Omega\\
 -\Delta_{g} u &= 0 && \text{in } \Omega\setminus \Omega_{\delta},
\end{aligned}
\right.
\end{equation}
where $\Omega_{\delta}=\{x\in \Omega:\operatorname{dist}(x,\partial \Omega)>\delta\}$ and $\delta>0$ is a small constant that does not depend on $\varepsilon$ and $\psi$.  Consequently, the integral $\int_{\Omega_{\delta}} a(x)\,dx$ is also independent of these parameters. 

For the Laplacian and the \(p\)-Laplacian operators, the existence of estimates controlling the energy in terms of the supremum norm of \(\psi\) shows that the required condition is simply \(\mathrm{M}_{\psi} \ll 1\). In our case, however, the absence of such estimates leads us to impose a slightly stronger smallness assumption, namely, $\|\psi\|_{C^{0,1}(\Omega)} \ll 1.$

We now establish a boundary regularity result that will be instrumental in achieving our objective and naturally leads to the smallness condition imposed on $||\psi||_{C^{0,1}}.$ In what follows, we denote by $\mathrm{C}$ a positive constant depending only on the universal parameters $n, g_0, \delta_0$ and $G(1).$

\begin{lemma}\label{lema 1} Let $u\in W^{1,G}(\mathrm{B}_{r}^+(x_0))$ be a weak solution of 
\begin{equation}\label{eq1lemma3.3}
\left\{
\begin{aligned}
\Delta_{g}u&= 0 && \text{in } \ \  \mathrm{B}_{r}^{+}(x_{0})\defeq \mathrm{B}_{r}(x_{0})\cap \{x_{n}>0\} \\
  u &= \psi && \text{on } \ \partial\mathrm{B}^{+}_r(x_0).
\end{aligned}
\right.
\end{equation}
Then there exists $\mathrm{C}=\mathrm{C}(n, g_0, \delta_0)>0$ such that
$$\int_{\mathrm{B}_{r/2}^+(x_0)}G(|\nabla u|)\,dx \leq \mathrm{C}\int_{\mathrm{B}_{r}^+(x_0)}G(|\nabla\psi| + 4M_{\psi}/r)\,dx. $$    
\end{lemma}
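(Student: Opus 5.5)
This is a Caccioppoli-type energy estimate. The idea is to test the equation with $\eta(u-\psi)$ for a cutoff $\eta$ and to absorb the gradient terms using Young's inequality in Orlicz form.

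\textbf{Setup.} Fix $\eta\in C^\infty_0(\mathrm{B}_r(x_0))$ with $0\le\eta\le1$, $\eta\equiv1$ on $\mathrm{B}_{r/2}(x_0)$ and $|\nabla\eta|\le 4/r$. We need a power of the cutoff to absorb terms, so we use the test function
\[
\phi=\eta^{m}(u-\psi),\qquad m=1+g_0 .
\]
This is admissible in $W^{1,G}_0(\mathrm{B}_r^+(x_0))$: we have $u-\psi=0$ on the flat part $\{x_n=0\}$, and $\eta$ vanishes near the curved part of the boundary. Testing $\Delta_g u=0$ with $\phi$ gives
\[
\int \eta^{m}\,\frac{g(|\nabla u|)}{|\nabla u|}\nabla u\cdot\nabla u
=\int \eta^{m}\,\frac{g(|\nabla u|)}{|\nabla u|}\nabla u\cdot\nabla\psi
- m\int \eta^{m-1}\,\frac{g(|\nabla u|)}{|\nabla u|}(u-\psi)\,\nabla u\cdot\nabla\eta .
\]
By (g3), the left-hand side is $\int\eta^m g(|\nabla u|)|\nabla u|\ge\int\eta^m G(|\nabla u|)$.

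\textbf{Absorption.} Both right-hand terms have the form $\int \eta^{m}g(|\nabla u|)\,h$, with $h=|\nabla\psi|$ in the first and $h=m\,|u-\psi||\nabla\eta|/\eta$ in the second. We estimate them with Young's inequality
\[
g(s)t\le \epsilon\, g(s)s+C_\epsilon\, G(t),
\]
which follows from (g1) and (g3): $\tilde G(g(s))\le g(s)s$, combined with the $\Delta_2$ scaling (G1) to move $\epsilon$ onto $G$. For the second term we take $t=m|u-\psi||\nabla\eta|/\eta$. The potentially singular factor $\eta^{-1}$ is handled by the weight: $\eta^m G(t)\le C\,G(\eta t)$ by (G1), since $m\ge g_0+1$ and $\eta\le1$. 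Choosing $\epsilon$ small and absorbing yields
\[
\int\eta^m G(|\nabla u|)\le C\int_{\mathrm{B}_r^+}G(|\nabla\psi|)+C\int_{\mathrm{B}_r^+}G\!\left(\tfrac{4}{r}|u-\psi|\right).
\]

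\textbf{The zero-order term.} Rather than invoke Poincaré (Lemma \ref{poincareinequality}), which would bring $G(|\nabla u|)$ back on the whole ball, we bound $|u-\psi|$ pointwise. By the comparison principle for $g$-harmonic functions (\cite[Theorem 2.2]{Braga1}) applied with constants, $\min\psi\le u\le\max\psi$. Interpreting $M_\psi$ as the supremum of $|\psi|$ on $\partial\mathrm{B}_r^+$, this gives $|u-\psi|\le 2M_\psi$. Then, using monotonicity of $G$ and (G2),
\[
G(|\nabla\psi|)+G\!\left(\tfrac{8M_\psi}{r}\right)\le C\,G\!\left(|\nabla\psi|+\tfrac{4M_\psi}{r}\right),
\]
with constants from (G1). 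This gives the claimed estimate on $\mathrm{B}_{r/2}^+(x_0)$.

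\textbf{Main obstacle.} The delicate step is the Orlicz Young inequality with the cutoff weight. We must ensure the constants depend only on $\delta_0$ and $g_0$, via $\tilde G(g(s))\le g_0\,G(s)$-type bounds, and that the choice $m=1+g_0$ allows $\eta^m G(t/\eta)\le C\,G(t)$. If this bookkeeping fails, the fallback is a standard hole-filling or iteration argument on radii between $r/2$ and $r$ to absorb the remaining term.
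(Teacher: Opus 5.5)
Your argument is correct, but it takes a different route from the paper.

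\textbf{Your route.} You run the classical Caccioppoli scheme with the single test function $\eta^{1+g_0}(u-\psi)$. The step you worried about does close, with constants depending only on $g_0$:
\begin{itemize}
\item $g(s)t\le \epsilon\, g(s)s+(1+g_0)\epsilon^{-(1+g_0)}G(t)$ follows from Young's inequality, convexity of $\tilde G$, and $\tilde G(g(s))\le s g(s)$;
\item $\eta^{1+g_0}G(t/\eta)\le (1+g_0)G(t)$ is just (G1).
\end{itemize}
So no hole-filling is needed.

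\textbf{The paper's route.} It avoids the cutoff product altogether. It tests with the truncations $(\psi-u-\varphi)^+$ and $(u-\psi-\varphi)^+$, where $\varphi$ is a radial ramp vanishing on $\mathrm{B}_{r/2}$ and equal to $2\mathrm{M}_\psi$ at $|x-x_0|=r$. On the support of these test functions the gradient is just $\pm(\nabla\psi-\nabla u)-\nabla\varphi$. This gives $\int G(|\nabla u|)\le C\int_{\mathrm{B}_r^+} G(|\nabla\psi|+|\nabla\varphi|)$ over $\mathrm{B}^+_{r/2}\cap\{u<\psi\}$ and over $\mathrm{B}^+_{r/2}\cap\{u>\psi\}$ separately. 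The set $\{u=\psi\}$ is trivial, since $\nabla u=\nabla\psi$ a.e.\ there.

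\textbf{The trade-off} is where the $L^\infty$ information enters.
\begin{itemize}
\item In the paper it is used only for admissibility, namely $\psi-u\le 2\mathrm{M}_\psi$ on the curved part of $\partial\mathrm{B}_r^+$, and there is no zero-order term to estimate.
\item In your version, admissibility needs only $u=\psi$ on the flat part, which is exactly the situation of the boundary-flattening application. However, you need the pointwise bound $|u-\psi|\le 2\mathrm{M}_\psi$ on the whole annulus where $\nabla\eta\neq 0$.
\end{itemize}
For that bound, $\mathrm{M}_\psi$ must dominate $|\psi|$ on the closed half-ball, not merely on $\partial\mathrm{B}_r^+$, since interior values of $\psi$ are not controlled by its boundary values. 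Also, your comparison argument bounds $u$ only if $u=\psi$ on all of $\partial\mathrm{B}_r^+$. In the application the bound on $u$ must come instead from the maximum principle for \eqref{prob}.

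Incidentally, under that literal hypothesis ($u=\psi$ on all of $\partial\mathrm{B}_r^+$), testing with $u-\psi$ alone already gives the stronger estimate $\int_{\mathrm{B}_r^+}G(|\nabla u|)\le C\int_{\mathrm{B}_r^+}G(|\nabla\psi|)$.
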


\begin{proof}
First, recall in the definition of $\mathrm{M}_{\psi}$ and consider the following cutoff function $\varphi: \mathrm{B}^{+}_{1}\to\mathbb{R}$ defined by
\[
\varphi(x):=
\begin{cases}
0, 
& \text{if } x\in \mathrm|x-x_{0}|\leq \dfrac{r}{2},\\[6pt]
2\,\mathrm{M}_{\psi}\!\left(\dfrac{|x-x_{0}|-\frac{r}{2}}{\frac{r}{2}}\right),
& \text{if } \dfrac{r}{2}<|x-x_{0}|<r,\\[10pt]
2\,\mathrm{M}_{\psi},
& \text{if } |x-x_{0}|\geq r.
\end{cases}
\]
By the definition of $\varphi$, it follows immediately that $|\nabla \varphi|\leq \frac{4\mathrm{M}_\psi}{r}$. Now, we define the auxiliary function
\[
\eta:=\max\{\psi-u-\varphi,0\}.
\]
Clearly, $\eta\in W^{1,G}_0\big(\mathrm{B}_{r}^+(x_0)\big)$, since $u,\psi,\varphi\in W^{1,G}(\mathrm{B}^{+}_{r}(x_{0}))$ and $\eta=0$ on $\partial \mathrm{B}_{r}^{+}(x_{0}).$  Hence,
$$
0= \int_{\mathrm{B}_{r}^+(x_0)}\frac{g(|\nabla u|)}{|\nabla u|}\nabla u\cdot\nabla\eta = \int_{\mathrm{B}_{r}^+(x_0)\cap \{\eta>0\}}\frac{g(|\nabla u|)}{|\nabla u|}\nabla u\cdot(\nabla\psi - \nabla u - \nabla\varphi). 
$$
Moreover, 
       \begin{align*}
           \int_{\mathrm{B}_{r}^+(x_0)\cap \{\eta>0\}} g(|\nabla u|)|\nabla u| &\leq \int_{\mathrm{B}_{r}^+(x_0)\cap \{\eta>0\}}\frac{g(|\nabla u|)}{|\nabla u|}\nabla u\cdot(\nabla\psi - \nabla\varphi)\\
           &\leq \int_{\mathrm{B}_{r}^+(x_0)\cap \{\eta>0\}} g(|\nabla u|)|\nabla\psi-\nabla\varphi|.
       \end{align*}
       Using Young's inequality with $\epsilon$
       $$\int_{\mathrm{B}_{r}^+(x_0)\cap \{\eta>0\}} G(|\nabla u|)\leq \epsilon\int_{\mathrm{B}_{r}^+(x_0)\cap \{\eta>0\}}\tilde{G}(g(|\nabla u|)) +  \mathrm{C}(\epsilon)\int_{\mathrm{B}_{r}^+(x_0)\cap \{\eta>0\}}G(|\nabla\psi -\nabla\varphi|), $$
      where we take $\epsilon=\frac{1}{2g_0},$ then we get
      $$\int_{\mathrm{B}_{r}^+(x_0)\cap \{\eta>0\}}G(|\nabla u|)\,dx \leq \mathrm{C}\int_{\mathrm{B}_{r}^+(x_0)\cap \{\eta>0\}}G(|\nabla\psi-\nabla\varphi|)\,dx. $$
     Since $\mathrm{B}_{r/2}^+(x_0)\cap\{u-\psi<0\}\subset \mathrm{B}_{r}^+(x_0)\cap \{\eta>0\}$, it follows that
     $$\int_{\mathrm{B}_{r/2}^+(x_0)\cap \{v-\psi<0\}}G(|\nabla u|)\,dx \leq \mathrm{C}\int_{\mathrm{B}_{r}^+(x_0)}G(|\nabla\psi|+|\nabla\varphi|)\,dx. $$
   Obviously
    $$\int_{\mathrm{B}_{r/2}^+(x_0)\cap \{v=\psi\}}G(|\nabla u|)\,dx \leq \mathrm{C}\int_{\mathrm{B}_{r}^+(x_0)}G(|\nabla\psi|+|\nabla\varphi|)\,dx, $$
    so that 
    $$\int_{\mathrm{B}_{r/2}^+(x_0)\cap \{v-\psi\leq 0\}}G(|\nabla u|)\,dx \leq \mathrm{C}\int_{\mathrm{B}_{r}^+(x_0)}G(|\nabla\psi|+|\nabla\varphi|)\,dx. $$
    Similarly, taking
 $\eta =\max\{u-\psi -\varphi,0\}$ and once $\mathrm{B}_{r/2}^+(x_0)\cap\{u-\psi>0\}\subset \mathrm{B}_{r}^+(x_0)\cap\{\eta>0\}$ we also get
    $$\int_{\mathrm{B}_{r/2}^+(x_0)\cap \{v-\psi> 0\}}G(|\nabla u|)\,dx \leq \mathrm{C}\int_{\mathrm{B}_{r}^+(x_0)}G(|\nabla\psi|+|\nabla\varphi|)\,dx, $$
    Therefore,
    $$\int_{\mathrm{B}_{r/2}^+(x_0)}G(|\nabla u|)\,dx \leq \mathrm{C}\int_{\mathrm{B}_{r}^+(x_0)}G(|\nabla\psi|+|\nabla\varphi|)\,dx. $$
\end{proof}

Thus, for a function $u$ satisfying the assumptions of the lemma, we obtain that there exists a constant 
 $\mathrm{C}=\mathrm{C}(n, G(1),g_0,\delta_0)>0$ such that
$$
\int_{\mathrm{B}_{r/2}^+(x_0)}G(|\nabla u|)\,dx \leq \mathrm{C}\int_{\mathrm{B}_{r}^+(x_0)}G(|\nabla\psi|)\,dx + \mathrm{C}\max\{\mathrm{M}_{\psi}^{\delta_0+1},\mathrm{M}_{\psi}^{g_0+1}\}r^{-(1+g_0)}|\mathrm{B}_{r}^+|.
$$
On the other hand, by the interior estimate (cf. \cite[Lemma 2.6]{MarWol08}) for g-harmonic functions
 $$
 \int_{\mathrm{B}_{s}(y)}G(|\nabla u|)\,dx \leq \mathrm{C}\cdot\int_{B_{\frac{3}{2}}s(y)}G\left(\frac{|u|}{s}\right)\,dx.
 $$
Therefore, since by the Maximum Principle, we have
$\|u\|_{L^\infty}\leq \mathrm{C}\cdot \mathrm{M}_{\psi},$ thus in the interior we have
$$\int_{\mathrm{B}_{s}(y)}G(|\nabla u|)\,dx \leq \mathrm{C}\max\{\mathrm{M}_{\psi}^{1+\delta_0}, \mathrm{M}_{\psi}^{1+g_0}\}s^{-(1+g_0)}|\mathrm{B}_{\frac{3}{2}s}|. $$

\begin{remark}
It is well known that, with minor adjustments, the above estimates remain valid for weak solutions of $\mathrm{div}(\mathcal{A}(\nabla u)) = 0$, where the operator $\mathcal{A}$ satisfies the growth conditions introduced by G. Lieberman in \cite{Lieber}
\begin{enumerate}
\item $\mathcal{A}(p)\cdot p\geq C_1g(|p|)|p|$
\item $|\mathcal{A}(p)|\leq C_2g(|p|).$
\end{enumerate}
\end{remark}
By combining the interior and boundary estimates with standard covering arguments and the boundary flattening technique for $C^{1,\alpha}$ domains (see, for instance, \cite{Antoni}), we are able to derive the following global estimate for solutions to problem \eqref{prob}, 
\begin{eqnarray*}
\int_{\Omega\setminus\Omega_\delta}G(|\nabla u|)\,dx &\leq& \mathrm{C}\cdot \int_{\Omega\setminus \Omega_\delta}G(|\nabla\psi|)\,dx 
+ \mathrm{C(\delta)}\cdot\max\{\mathrm{M}_{\psi}^{\delta_0+1},\mathrm{M}_{\psi}^{g_0+1}\},
\end{eqnarray*}
where $\mathrm{C}(\delta)$ denotes a positive constant depending additionally on $\delta$. Since we have
 \begin{align*}
\mathcal{J}_{\varepsilon}[u]&\leq \int_{\Omega\setminus\Omega_\delta}G(|\nabla u|)\,dx + \int_{\Omega\setminus\Omega_\delta}a(x)\,dx,\\
 \end{align*} 
and as
 \[
\mathcal{J}_{\varepsilon}[u_{0}]
 =\int_{\Omega}\bigl(G(|\nabla u_{0}|)+a(x)\bigr)\,dx
 \geq \int_{\Omega}a(x)\,dx,
 \]
hence, for all small $\epsilon>0$
$$\mathcal{J}_\varepsilon[u] - \mathcal{J}_\varepsilon[u_0] \leq \int_{\Omega\setminus\Omega_\delta}G(|\nabla u|)\,dx - \int_{\Omega_\delta}a(x)\,dx <0, $$
if  $\mathrm{M}_{\psi}+ \int_{\Omega\setminus\Omega_\delta}G(|\nabla\psi|)\,dx\leq c_0, $ for some small constant
 $c_0=c_0(\delta,a).$ Observe that this condition is satisfied provided
\[
\mathrm{M}_{\psi}+ \|\nabla \psi\|_{L^{\infty}}\, G(1)\, |\Omega| \leq c_{0},
\]
which, in particular, holds whenever $\|\psi\|_{C^{0,1}} \ll 1.$

Finally, using the fact that $u_{2}$ is a minimizer of $\mathcal{J}_{\varepsilon}$, we conclude that
$$
\mathcal{J}_{\varepsilon}[u_{2}]\leq \mathcal{J}_{\varepsilon}[u]<\mathcal{J}_{\varepsilon}[u_{0}],
$$
proving that $u_{2}\neq u_{0}.$

\begin{remark}
The boundary data $\psi$ plays a crucial role in determining the number of solutions. In addition to the bifurcation phenomenon described above, if $\mathrm{m}_{\psi}$ is sufficiently large it is possible to see that the minimizing solution $u_{2}$ coincides with the trivial solution $u_{0}$. This observation follows, up to minor adjustments, the corresponding argument for the $p$-Laplacian in \cite{HajWang19}, relying instead on the $C^{1,\alpha}$ estimates for $g$-harmonic functions obtained in \cite{BraSou25}.
\end{remark}

This reveals a bifurcation phenomenon: when the boundary data is sufficiently large, the problem admits only the trivial solution \(u_0\). However, as the boundary data decreases toward a critical threshold, multiple solutions emerge, namely \(u_0\) and \(u_2\).

In this bifurcation setting, we construct a third solution to problem \eqref{eq5}. This claim is summarized in this result.

\begin{theorem}\label{thirdsolution}
If $\varepsilon\ll \mathrm{m}_{\psi}$ and $\mathcal{J}_{\varepsilon}[u_{2}]<\mathcal{J}_{\varepsilon}[u_{0}]$, then there exists a third weak solution $u_{1}$ of the problem \eqref{eq5}. Moreover, $\mathcal{J}_{\varepsilon}[u_{1}]\geq \mathcal{J}_{\varepsilon}[u_{0}]+\kappa$, for some $\kappa>0$ which is independent of $\varepsilon$. 
\end{theorem}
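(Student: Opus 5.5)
We apply the Mountain Pass Lemma to $\mathcal{J}_{\varepsilon}$ on the affine space $\mathcal{K}_{\varepsilon}=\psi+W^{1,G}_{0}(\Omega)$. The two valleys are the trivial solution $u_{0}$ and the minimizer $u_{2}$, with $\mathcal{J}_{\varepsilon}[u_{2}]<\mathcal{J}_{\varepsilon}[u_{0}]$. Write $v=u-u_{0}\in W^{1,G}_{0}(\Omega)$ and $\Phi(v)=\mathcal{J}_{\varepsilon}[u_{0}+v]$. Because $\Gamma_{\varepsilon}$ is smooth with bounded derivative and $G\in C^{2}$ satisfies the $\Delta_{2}$-type bounds (G1)--(G2), $\Phi$ is $C^{1}$ on $W^{1,G}_{0}(\Omega)$, and its critical points are exactly the weak solutions of \eqref{eq5}. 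The mountain pass level is
\[
c=\inf_{\gamma\in\mathcal{P}}\max_{t\in[0,1]}\Phi(\gamma(t)),
\]
where $\mathcal{P}$ is the set of continuous paths $\gamma$ with $\gamma(0)=0$ and $\gamma(1)=u_{2}-u_{0}$. We will produce a critical point $u_{1}=u_{0}+v_{1}$ with $\Phi(v_{1})=c\ge \mathcal{J}_{\varepsilon}[u_{0}]+\kappa$. This level exceeds both $\mathcal{J}_{\varepsilon}[u_{0}]$ and $\mathcal{J}_{\varepsilon}[u_{2}]$, so $u_{1}$ is distinct from both.

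\textbf{Step 1 (mountain geometry).} We must find $\rho,\kappa>0$, independent of $\varepsilon$, such that $\Phi(v)\ge\Phi(0)+\kappa$ whenever $\|\nabla v\|_{L^{G}}=\rho$. We also need $\|\nabla(u_{2}-u_{0})\|>\rho$, which follows once the first claim holds, since $\Phi(u_{2}-u_{0})<\Phi(0)$.

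By convexity of $G$ and the equation $\Delta_{g}u_{0}=0$, we have $\int_{\Omega} G(|\nabla(u_{0}+v)|)-G(|\nabla u_{0}|)\ge 0$. More precisely, it is bounded below by a modular quantity comparable to $\int_{\Omega} G(|\nabla v|)$, at least where $|\nabla v|$ dominates $|\nabla u_{0}|$. This is the standard monotonicity inequality for Lieberman operators, and we use that $u_{0}\in C^{1,\alpha}(\overline\Omega)$ has bounded gradient.

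The loss in the penalization term is $\int_{\Omega} a\,(1-\Gamma_{\varepsilon}(u_{0}+v))\le \|a\|_{\infty}\,|\{v<\varepsilon-u_{0}\}|$. Since $u_{0}\ge \mathrm{m}_{\psi}$ by the comparison principle, and $\varepsilon\ll \mathrm{m}_{\psi}$, this set is contained in $\{|v|>\mathrm{m}_{\psi}/2\}$. Its measure is at most $\int_{\Omega} G(2|v|/\mathrm{m}_{\psi})/G(1)$. By Lemma~\ref{poincareinequality} and (G1), this is controlled by $C\max\{\rho^{1+\delta_{0}},\rho^{1+g_{0}}\}$ up to powers of $\mathrm{m}_{\psi}^{-1}$.

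The point is that the energy gain is quadratic-like in the gradient near $u_{0}$, with an exponent tied to $\delta_{0}$ and $g_{0}$, while the loss comes from a superlevel set. I expect this comparison to be the main obstacle, because of the inhomogeneity of $G$ and the possibly degenerate $u_{0}$-weighted Hessian. To handle it, I would first try the sharper bound via Sobolev/Orlicz embeddings, $|\{|v|>\mathrm{m}_{\psi}/2\}|\le C\|v\|_{L^{q}}^{q}$ with some $q>1+g_{0}$, so that the loss is of higher order than the gain. Where $G$ lacks a suitable embedding, an alternative is to use $C^{1,\alpha}$ bounds to replace $G(|\nabla u_{0}+\nabla v|)-G(|\nabla u_{0}|)$ by a $g'(|\nabla u_{0}|+|\nabla v|)|\nabla v|^{2}$ lower bound. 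Either way, all constants are independent of $\varepsilon$, since $\varepsilon$ only enters through the inclusion of the contact set.

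\textbf{Step 2 (Palais--Smale).} Let $(v_{k})$ be a sequence with $\Phi(v_{k})\to c$ and $\Phi'(v_{k})\to 0$. Since $\Gamma_{\varepsilon}$ is bounded, $\int_{\Omega} G(|\nabla v_{k}|)$ is bounded, so $(v_{k})$ is bounded by Lemma~\ref{behaviornorm} and Lemma~\ref{poincareinequality}. By reflexivity, $v_{k}\rightharpoonup v$ in $W^{1,G}_{0}$ along a subsequence, and by Theorem~\ref{embeddings} $v_{k}\to v$ strongly in $L^{1+\delta_{0}}$.

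Testing $\Phi'(v_{k})$ against $v_{k}-v$ kills the term $\int_{\Omega} a\beta_{\varepsilon}(u_{0}+v_{k})(v_{k}-v)$, because $\beta_{\varepsilon}$ is bounded. We are left with
\[
\limsup_{k\to\infty}\int_{\Omega}\Big(\frac{g(|\nabla u_{k}|)}{|\nabla u_{k}|}\nabla u_{k}-\frac{g(|\nabla u|)}{|\nabla u|}\nabla u\Big)\cdot\nabla(u_{k}-u)\le 0 .
\]
The $(S_{+})$ property of the $g$-Laplacian then gives strong convergence. This property follows from strict monotonicity together with the modular convergence of $\int_{\Omega} G(|\nabla u_{k}|)$, using uniform convexity under (L2).

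\textbf{Step 3.} The Mountain Pass Lemma (Ambrosetti--Rabinowitz) now yields a critical point $v_{1}$ at level $c\ge\Phi(0)+\kappa$, which gives the solution $u_{1}$ and the stated energy bound.
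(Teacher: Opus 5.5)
\textbf{The gap is Step~1.} Step~1 is the actual content of the theorem: a ridge around $0$ whose height $\kappa$ does not depend on $\varepsilon$. You leave it as ``the main obstacle'', and as written your two estimates do not compare.

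\emph{Loss versus gain.} Chebyshev plus Lemma~\ref{poincareinequality} bound the loss by a multiple of $\rho^{1+\delta_0}$ for $\rho<1$. The gain $\int_\Omega G(|\nabla(u_0+v)|)-G(|\nabla u_0|)$ is in general bounded below only by something of order $\rho^{1+g_0}$, or $\rho^{2}$ near $u_0$ when $g_0<1$. That lower bound is smaller than your loss bound, so nothing follows.

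\emph{Your proposed repairs.} The $L^q$ route does not close the gap. In general the only embedding available is $W^{1,G}_0\hookrightarrow W^{1,1+\delta_0}_0\hookrightarrow L^{(1+\delta_0)^*}$, and $(1+\delta_0)^*<1+g_0$ once $g_0>\delta_0$ and $n$ is large. Also, when $g_0<1$ the gain is quadratic, so $q>1+g_0$ would not suffice anyway. The $C^{1,\alpha}$ alternative only improves the gain side.

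\emph{How the paper handles it.} The paper never makes this comparison. It asserts, citing Ali and Wang, that $\{u_0+v\le\varepsilon\}=\emptyset$ on a small sphere, and then bounds only the gradient term from below. You should not borrow that shortcut, because it needs $L^\infty$ control of $v$ by $\|v\|_E$. For $n>1+g_0$, take $v=-\mathrm{M}_{\psi}\phi((x-x_0)/r)$ with $\phi\in C^\infty_0(\mathrm{B}_1)$ and $\phi=1$ on $\mathrm{B}_{1/2}$. Then $\int_\Omega G(|\nabla v|)\lesssim r^{n-1-g_0}\to0$, yet $u_0+v\le 0$ on $\mathrm{B}_{r/2}(x_0)$.

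\textbf{The missing idea.} Compare the loss with the gradient modular $X=\int_\Omega G(|\nabla v|)$ rather than with powers of $\rho$, using that the loss set forces $v$ to drop by $\mathrm{m}_{\psi}/2$.
\begin{itemize}
\item Set $s=\mathrm{m}_{\psi}/4$, $w=\min\{(-v-s)^+,s\}\in W^{1,1}_0(\Omega)$ and $A=\{-2s<v<-s\}$. The $W^{1,1}$ Sobolev inequality gives $s\,|\{v\le -2s\}|^{\frac{n-1}{n}}\le C_n\int_A|\nabla v|$.
\item By Jensen, $\int_A|\nabla v|\le |A|\,G^{-1}(X/|A|)$.
\item Chebyshev and Poincar\'e give $|A|\le C'X$. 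Since $\mu\mapsto \mu\, G^{-1}(X/\mu)$ is nondecreasing, $\int_A|\nabla v|\le C'G^{-1}(1/C')\,X$.
\end{itemize}
Hence, for $\varepsilon\le \mathrm{m}_{\psi}/2$, the loss is at most $C X^{\frac{n}{n-1}}$, with $C$ depending on $\mathrm{m}_{\psi}$ but not on $\varepsilon$. The loss is therefore superlinear in $X$.

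\emph{Closing the argument.} If $\delta_0\ge 1$, then $g(t)/t$ is nondecreasing and the monotonicity inequality gives a gain of at least $cX$. So $\Phi(v)-\Phi(0)\ge cX/2\ge c'\rho^{1+g_0}$ on every small sphere, which also yields $\|u_2-u_0\|_E>\rho$. If $\delta_0<1$, the gain is only quadratic where $|\nabla v|\ll|\nabla u_0|$, and you still have to dominate $X$ by the gain there. That needs a separate argument; this is the regime in which the paper imposes $n<\frac{1+\delta_0}{1-\delta_0}$.

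\textbf{Palais--Smale.} Your Palais--Smale step is correct and simpler than the paper's. Because $\beta_\varepsilon$ is bounded and $v_k\to v$ in $L^1$, the lower-order term drops out. You need neither the compactness into $L^{(1+\delta_0)'}$ nor the dimensional restriction the paper uses at that point.
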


To this end, let $E = W^{1,G}_{0}(\Omega)$ endowed with the norm
\[
\|v\|_{E} := \|\nabla v\|_{L^{G}(\Omega)},
\]
which is well defined thanks to a Poincar\'e-type inequality (see \cite[Lemma 2.4]{MarWol08}), and makes $E$ a Banach space. Given $v \in E$, we adopt the convention $u = v + u_{0}$, so that
\[
\|v\|_{E} = \|\nabla u - \nabla u_{0}\|_{L^{G}(\Omega)}.
\]
We now introduce the following functional associated with $\mathcal{J}_{\varepsilon}$,
\begin{equation*}
\mathcal{I}_{\varepsilon}[v]=\mathcal{J}_{\varepsilon}[u]-\mathcal{J}_{\varepsilon}[u_{0}]=\int_{\Omega}G(|\nabla u|)-G(|\nabla u_{0}|)- \int_{\{u<\varepsilon\}}a(x)(1-\Gamma_{\varepsilon}(u)).
\end{equation*}
In other words, the functional $\mathcal{I}_{\varepsilon}$ measures the variation of the energy $\mathcal{J}_{\varepsilon}$ between the trivial solution $u_{0}$ and the perturbed profile $u = v + u_{0}$, taking into account both the change in the gradient term and the penalization on the active set $\{u > \varepsilon\}$. Clearly, $\mathcal{I}_{\varepsilon}[0] = 0$ and $\mathcal{I}_{\varepsilon}[v_{2}] \le 0$ for $v_{2} = u_{2} - u_{0}$, since $u_{2}$ is a minimizer of $\mathcal{J}_{\varepsilon}$ by Theorem~\ref{minfun}.

Assuming that the bifurcation phenomenon occurs, that is, $\mathcal{I}_{\varepsilon}[v_{2}] < 0$, we will show, by a topological method—the Mountain Pass Lemma—that this functional admits a critical point, from which we construct a weak solution to the Dirichlet problem \eqref{eq5}.

It is not difficult to verify that the Fr\'echet derivative of $\mathcal{I}_{\varepsilon}$ at $v \in E$ is given by
\begin{equation*}
\mathcal{I}_{\varepsilon}'[v]\varphi=\int_{\Omega}\frac{g(|\nabla u|)}{|\nabla u|}\nabla u\cdot \nabla \varphi-a(x)\beta_{\varepsilon}(u)\varphi, \, \varphi\in E,
\end{equation*}
where $\beta_{\varepsilon} = \Gamma_{\varepsilon}'$, and in this case $\mathcal{I}_{\varepsilon}'[v]$ can be viewed as an element of the dual space $E^{*}$. In summary, 
\[
\mathcal{I}_{\varepsilon}'[v]=-\Delta_{g}(v+u_{0})+a(x)\beta_{\varepsilon}(v+u_{0})\in E^{*}.
\]

We now verify that $\mathcal{I}_{\varepsilon}$ has $C^1$ regularity. Let $v\in E$ and $(w_n)$ any sequence that converges to $v$ in the $W^{1,G}$ norm. We have for any $\varphi\in E$
\begin{eqnarray}
|\mathcal{I}_{\varepsilon}'[v]\varphi-\mathcal{I}_{\varepsilon}'[w_n]\varphi
|&\leq& \left|\int_{\Omega}\left(\frac{g(|\nabla v+\nabla u_{0}|)}{|\nabla v+\nabla u_{0}|}(\nabla v+\nabla u_{0})-\frac{g(|\nabla w_n+\nabla u_{0}|)}{|\nabla w_n+\nabla u_{0}|}(\nabla w_n+\nabla u_{0})\right)\cdot \nabla \varphi\right|\nonumber\\
&+&\left|\int_{\Omega}a(x)(\beta_{\varepsilon}(v+u_{0})-\beta_{\varepsilon}(w_n+u_{0}))\varphi\right|\nonumber\\
&\coloneqq&I_{1}+I_{2}.\label{estlipder}
\end{eqnarray}
Since $L^G \hookrightarrow L^{1+\delta_0},$ we have $\nabla w_{n}\to \nabla v$ in norm $L^{1+\delta_0}.$ Then for a subsequence $(w_{n_k})$ we have that $|\nabla w_{n_k}|$ is uniform dominated in $L^{1+\delta_0}$ and $\nabla w_{n_k}\to \nabla v$ a.e. Thus, using Dominated Convergence Theorem, we get
$$\int_{\Omega}\frac{g(|\nabla w_{n_k}+\nabla u_{0}|)}{|\nabla w_{n_k}+\nabla u_{0}|}(\nabla w_{n_k}+\nabla u_{0})\cdot\nabla\varphi\,dx \to \int_{\Omega}\frac{g(|\nabla v+\nabla u_{0}|)}{|\nabla v+\nabla u_{0}|}(\nabla v+\nabla u_{0})\cdot\nabla\varphi\,dx.$$

On the other hand, once again using the fact that $L^G\hookrightarrow L^{1+\delta_0},$ together with the regularity of $\beta_\varepsilon,$ it follows from the Dominated Convergence Theorem that, for a subsequence $(w_{m_k})$ we have:
$$\int_\Omega a(x)\beta_\varepsilon(w_{m_k}+u_0)\varphi\,dx \to  \int_\Omega a(x)\beta_\varepsilon(v+u_0)\varphi\,dx.$$

Therefore, there always exists a subsequence such that, for any $\varphi \in E$, we have:
$$|\mathcal{I}_{\varepsilon}'[v]\varphi-\mathcal{I}_{\varepsilon}'[w_{n_k}]\varphi
| \to 0,
$$
and by a classical proof by contradiction argument, we see that this holds for the entire sequence $(w_{n})_{n \geq 1}$. Therefore, it follows that $\mathcal{I}'_\epsilon$ is continuous.

Next we justify the Palais–Smale condition on the functional $\mathcal{I}_\varepsilon$.
Suppose $(v_{k})_{k\geq 1}\subset E$ is a Palais–Smale sequence in the sense that
$$|\mathcal{I}_\varepsilon[v_k]|\leq \mathrm{M}
\quad\text{and}\quad \mathcal{I}'_\varepsilon[v_k]\to 0\quad\text{in}\,\,E^* $$
for some $\mathrm{M}>0$. Let $u_k=v_k+u_0\in W^{1,G}(\Omega)$. 

We observe that $a(x)\beta_\varepsilon(v+u_0)\in W^{1,G}_0(\Omega),$ since $a\in W^{2,G}$ is continuous and bounded, $\beta_\varepsilon$ and $\beta_\varepsilon'$ are smooth supported in $[0,\varepsilon],$
and $v+u_0>\varepsilon$ near $\partial\Omega.$ Since $L^G$ is continuous embedded in the Lebesgue space $L^{1+\delta_0},$ arguing in the same way as in \cite{HajWang19} with $p = 1 + \delta_0,$ we see that the mapping $v\mapsto a(x)\beta_\varepsilon(v+u_0)$ is a compact map from $W^{1,1+\delta_0}_0(\Omega)$ to $L^{(1+\delta_0)'}(\Omega)\subset E^*$ such that for a subsequence, still denoted by $(v_k), $ it holds that 
  $$ a(x)\beta_\varepsilon(v_k+u_0) \to -f\quad \text{in}\,\,\,L^{(1+\delta_0)'}(\Omega).$$
Here, by applying the result of \cite{HajWang19}, we obtain an analogous restriction on the dimension of the underlying space. Specifically, when \(0 < \delta_0 < 1\), we assume that
\[
n < \frac{1 + \delta_0}{1 - \delta_0}.
\]
We recall that
$$
|\mathcal{I}'_\varepsilon[v_k]|= \sup_{\|\varphi\|_{E}\leq 1}\left\vert \int_\Omega \frac{g(|\nabla u_k|)}{|\nabla u_k|}\nabla u_k\cdot\nabla\varphi + a(x)\beta_\varepsilon(u_k)\varphi\right\vert \to 0.
$$
As a consequence,
$$
\sup_{\|\varphi\|_{E}\leq 1}\left\vert \int_\Omega \frac{g(|\nabla u_k|)}{|\nabla u_k|}\nabla u_k\cdot\nabla\varphi -f\varphi\right\vert \to 0.
$$
Since the sequence $(\mathcal{I}_\varepsilon[v_k])_{k \geq 1}$ is bounded, it follows from Lemma 2.3 of \cite{MarWol08} that a subsequence still denoted by $(v_{k})_{k\geq 1}$  by abusing the notation without confusion, converges weakly in $E,$ and in particular weakly in $W^{1,1+\delta_0}_0(\Omega)$. Thus, we have
$$
\int_\Omega fv_k -fv_m \to 0\quad\text{as}\,\,\,k,m\to\infty.
$$
Then by setting $\varphi=v_k-v_m=u_k-u_m$ we get 
\begin{eqnarray}
\left\vert \int_\Omega\left(\frac{g(|\nabla u_k|)}{|\nabla u_k|}\nabla u_k - \frac{g(|\nabla u_m|)}{|\nabla u_m|}\nabla u_m\right)\cdot\nabla(u_k-u_m) \right\vert\leq \Bigg\vert \int_\Omega \frac{g(|\nabla u_k|)}{|\nabla u_k|}\nabla u_k\cdot\nabla(u_k-u_m)- f(u_k-u_m)\Bigg\vert\nonumber\\ 
 +\left\vert \int_\Omega \frac{g(|\nabla u_m|)}{|\nabla u_m|}\nabla u_m\cdot\nabla(u_k-u_m) - f(u_k-u_m)\right\vert \to 0. \label{eq 1}
\end{eqnarray}
We set $F(t)=\frac{g(t)}{t}$ and
 $$
 I^{k,m}=  \int_\Omega\left(\frac{g(|\nabla u_k|)}{|\nabla u_k|}\nabla u_k - \frac{g(|\nabla u_m|)}{|\nabla u_m|}\nabla u_m\right)\cdot\nabla(u_k-u_m)\,dx.
 $$
 Thus, arguing in the same way as in the proof of Theorem 1.1 in \cite{BraSou25}, we see that
\begin{align*}
    |I^{k,m}| &\geq \int_\Omega\int_0^1 F(|\nabla u_k+ (1-t)(\nabla u_k- \nabla u_m)|)|\nabla u_k -\nabla u_m|^2\,dt\,dx\\
    &\geq C(g_0,n)\left(\int_{S_1}F(|\nabla u_k|)|\nabla u_k -\nabla u_m|^2 +  \int_{S_2}G(|\nabla u_k -\nabla u_m|) \right)
\end{align*}
where $S_1=\{|\nabla u_k -\nabla u_m|\leq 2|\nabla u_k|\}$ and $S_2=\{|\nabla u_k -\nabla u_m|>2|\nabla u_k|\}.$

Thus, on the set $S_2$, we have
$$
\int_{S_2}G(|\nabla u_k -\nabla u_m|) \leq C|I^{k,m}| \to 0\quad\text{as}\,\,\,k,m\to\infty.
$$
Meanwhile, on the set $S_1$, since $g$ is increasing, we have
$$
F(|\nabla u_k-\nabla u_m|)|\nabla u_k -\nabla u_m| \leq \mathrm{C}F(|\nabla u_k|)|\nabla u_k|.
$$
Using Hölder’s inequalities, we see that
\begin{align*}
\int_{S_1}G(|\nabla u_k- \nabla u_m|) &\leq \int_{S_1}(F(|\nabla u_k- \nabla u_m|)|\nabla u_k- \nabla u_m|)^{\frac{1}{2}}F(|\nabla u_k- \nabla u_m|)^{\frac{1}{2}}|\nabla u_k- \nabla u_m|^{\frac{1}{2}}\\
&\leq \mathrm{C}\int_{S_1}(F(|\nabla u_k|)|\nabla u_k|)^{\frac{1}{2}}F(|\nabla u_k- \nabla u_m|)^{\frac{1}{2}}|\nabla u_k- \nabla u_m|^{\frac{3}{2}}\\
&\leq \mathrm{C}\left(\int_{S_1}F(|\nabla u_k|)|\nabla u_k- \nabla u_m|^{2} \right)^{\frac{1}{2}}\left(\int_{S_1}F(|\nabla u_k- \nabla u_m|)|\nabla u_k- \nabla u_m||\nabla u_k| \right)^{\frac{1}{2}}\\
&\leq \mathrm{C}\left(\int_{S_1}F(|\nabla u_k|)|\nabla u_k- \nabla u_m|^{2} \right)^{\frac{1}{2}}\left(\int_{S_1}g(|\nabla u_k|)|\nabla u_k| \right)^{\frac{1}{2}}\\
&\leq \mathrm{C}|I^{k,m}|^{\frac{1}{2}}\left(\int_{S_1}G(|\nabla u_k|)\right)^{\frac{1}{2}}.
\end{align*}
Since for $k=1, 2, 3,...$
$$
\int_{\Omega}G(|\nabla u_k|) \leq \mathrm{M} + \int_{\Omega}G(|\nabla u_0|)+ \|a\|_{\infty}=:\mathrm{C}(u_0, \mathrm{M}, a),
$$
we get
\begin{align*}
\int_{\Omega}G(|\nabla u_k-\nabla u_m|) &= \int_{S_1}G(|\nabla u_k-\nabla u_m|) + \int_{S_2}G(|\nabla u_k-\nabla u_m|)\\
&\leq \mathrm{C}|I^{k,m}|^{\frac{1}{2}}\mathrm{C}(u_0,\mathrm{M},a)^{\frac{1}{2}} + \mathrm{C}|I^{k,m}|,
 \end{align*}
and then, as a consequence of \eqref{eq 1}, it follows that
 $$
 \int_{\Omega}G(|\nabla u_k-\nabla u_m|) \to 0\quad\text{as}\,\,\,k,m\to \infty. 
 $$
Hence, it follows from Lemma \ref{behaviornorm} that $(u_k)_{k \geq 1}$ is a Cauchy sequence in $W^{1,G}(\Omega)$; consequently, $(v_k)_{k \geq 1}$ is a Cauchy sequence in $W^{1,G}_0(\Omega)$, and therefore it converges. The Palais–Smale condition is verified for the functional $\mathcal{I}_\epsilon$ on the Banach space $W^{1,G}_0(\Omega).$

We are now able to demonstrate that there exists a closed mountain ridge surrounding the origin of the Banach space $E$ that separates $v_2$ from the origin, with the energy $\mathcal{I}_\epsilon$ serving as the elevation function, as stated in the following lemma.

\begin{lemma}
For all small $\varepsilon>0$ such that $\mathrm{C}\epsilon \leq \frac{1}{2}\psi_m$ for a large universal constant $\mathrm{C}$, there exist positive constants $\delta$ and $\theta$ independent of $\varepsilon$, such that for every $v\in E$ with $\|v\|_E=\delta$, the inequality $\mathcal{I}_\varepsilon[v]\geq \theta$ holds.
\end{lemma}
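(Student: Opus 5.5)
We bound the two parts of $\mathcal{I}_\varepsilon[v]$ separately, with $u=v+u_0$:
\[
\mathcal{I}_\varepsilon[v]=\int_\Omega \bigl(G(|\nabla u|)-G(|\nabla u_0|)\bigr)-\int_{\{u<\varepsilon\}}a(x)(1-\Gamma_\varepsilon(u)).
\]
For the gradient part we use convexity of $G$ together with the fact that $u_0$ is $g$-harmonic. Since $v\in W^{1,G}_0(\Omega)$ is an admissible test function, the first variation vanishes:
\[
\int_\Omega \frac{g(|\nabla u_0|)}{|\nabla u_0|}\nabla u_0\cdot\nabla v=0.
\]
Hence
\[
\int_\Omega G(|\nabla u|)-G(|\nabla u_0|)=\int_\Omega\int_0^1\Bigl(\tfrac{g(|\nabla u_0+t\nabla v|)}{|\nabla u_0+t\nabla v|}(\nabla u_0+t\nabla v)-\tfrac{g(|\nabla u_0|)}{|\nabla u_0|}\nabla u_0\Bigr)\cdot\nabla v\,dt .
\]
We then apply the same monotonicity estimate used above in the Palais--Smale argument, i.e.\ the splitting into the sets $\{|\nabla v|\le 2|\nabla u_0|\}$ and $\{|\nabla v|>2|\nabla u_0|\}$. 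This gives a lower bound
\[
\int_\Omega G(|\nabla u|)-G(|\nabla u_0|)\ \ge\ c\int_{S_2}G(|\nabla v|)+c\int_{S_1}F(|\nabla u_0|)|\nabla v|^2 .
\]
Since $u_0\in C^{1,\alpha}(\overline\Omega)$ by \cite{BraSou25}, $|\nabla u_0|\le L$ with $L$ independent of $\varepsilon$. On $S_1$ we have $|\nabla v|\le 2L$, so by (g1) the quantity $F(|\nabla u_0|)|\nabla v|^2$ dominates $c(L)\,G(|\nabla v|)$ wherever $|\nabla u_0|$ is not too small. The set where $|\nabla u_0|$ degenerates has to be absorbed differently; there we use $F(s)s^2\sim G(s)$ on the segment. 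The goal is the coercive bound
\[
\int_\Omega G(|\nabla u|)-G(|\nabla u_0|)\ \ge\ c_1\min\Bigl\{\|v\|_E^{1+g_0},\ \|v\|_E^{1+\delta_0},\ \|v\|_E^{2}\Bigr\},
\]
with $c_1$ independent of $\varepsilon$, obtained via Lemma~\ref{behaviornorm} in its reverse modular/norm form.

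For the penalization part, we use $\mathrm{m}_\psi\le u_0$ in $\Omega$, which follows from the comparison principle, and the hypothesis $\varepsilon\le \mathrm{m}_\psi/(2\mathrm C)$. On $\{u<\varepsilon\}$ we then have $v=u-u_0<\varepsilon-\mathrm{m}_\psi\le-\mathrm{m}_\psi/2$, so $|v|\ge \mathrm{m}_\psi/2$. Therefore
\[
\int_{\{u<\varepsilon\}}a(1-\Gamma_\varepsilon(u))\le \|a\|_\infty\,\bigl|\{|v|\ge \mathrm m_\psi/2\}\bigr| .
\]
By Chebyshev's inequality in modular form and the Poincaré inequality (Lemma~\ref{poincareinequality}), together with (G1), this is bounded by
\[
\|a\|_\infty\,\frac{\int_\Omega G(|v|)}{G(\mathrm m_\psi/2)}\le \|a\|_\infty\,\frac{\int_\Omega G(\mathrm C|\nabla v|)}{G(\mathrm m_\psi/2)}.
\]
The right-hand side is of order $\|v\|_E^{1+\delta_0}$ for small $\|v\|_E$. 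This exponent is not better than the one in the gradient bound, so plain Chebyshev is insufficient. Instead we use the Sobolev-type gain: apply Chebyshev with a higher exponent $q>1+\delta_0$, through the embedding $W^{1,1+\delta_0}_0\hookrightarrow L^{q}$ with $q$ the Sobolev exponent. This yields
\[
\bigl|\{|v|\ge \mathrm m_\psi/2\}\bigr|\le C\,\mathrm m_\psi^{-q}\|v\|_E^{q}.
\]
Then
\[
\mathcal I_\varepsilon[v]\ge c_1\|v\|_E^{\gamma}-C\,\|v\|_E^{q}
\]
for some $\gamma<q$. Choosing $\delta$ small, depending only on $\mathrm m_\psi$, $a$, $u_0$ and the structural constants, gives $\theta=\tfrac{c_1}{2}\delta^\gamma>0$, independent of $\varepsilon$.

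The main obstacle is the coercivity exponent $\gamma$. In the degenerate case $\delta_0\ge1$ we expect $\gamma$ to be up to $1+g_0$, which may exceed the Sobolev exponent $q$. In the singular case the quadratic term on $S_1$ gives $\gamma=2$, and we need $2<q=\frac{n(1+\delta_0)}{n-1-\delta_0}$. This is exactly where the dimensional restriction $n<\frac{1+\delta_0}{1-\delta_0}$, already imposed above, should enter. If the comparison of exponents fails in the degenerate regime, the fallback is to use the $L^\infty$ bound on $u_0$ and the boundedness of $\nabla u_0$ to localize and compare on $S_1$ and $S_2$ separately, with a Sobolev gain in each piece.
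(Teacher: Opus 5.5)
The argument does not close, and it breaks exactly where you suspect. Your final inequality $\mathcal{I}_\varepsilon[v]\ge c_1\|v\|_E^{\gamma}-C\|v\|_E^{q}$, with $q=(1+\delta_0)^*$, is useful only if $\gamma<q$. Whenever $g_0>1$, your coercive bound gives $\gamma=1+g_0$. No bound phrased through $\|v\|_E$ can do better. Lemma~\ref{behaviornorm} only yields $\int_\Omega G(|\nabla v|)\ge c\,\delta^{1+g_0}$ for $\|v\|_E=\delta<1$. This is attained by steep bumps on tiny sets, where the gradient part of $\mathcal{I}_\varepsilon$ is comparable to $\int_\Omega G(|\nabla v|)$.

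But (L1)--(L2) impose no relation between $g_0$ and $(1+\delta_0)^*$. For $G(t)=\frac{t^2}{2}+\frac{t^{11}}{11}$ in $n=3$ one has $1+g_0=11>6=(1+\delta_0)^*$, so $c_1\delta^{11}-C\delta^{6}<0$ for all small $\delta$. The dimensional restriction does not help: it concerns only $\delta_0<1$, the paper imposes it in the Palais--Smale step, and your requirement $q>2$ needs only the weaker $n<\frac{2(1+\delta_0)}{1-\delta_0}$. The ``fallback'' is not an argument. So your scheme proves the lemma only for $g_0\le1$ (where $\gamma=2$) and for $\delta_0\ge1$ with $1+g_0<(1+\delta_0)^*$, not in the stated generality. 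In the mixed case $\delta_0<1<g_0$ even the coercive bound is only asserted: $F$ is not monotone there, and the remark about the set where $\nabla u_0$ degenerates gives no specific power.

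The loss is structural: the two bounds you combine are never sharp for the same $v$. Indeed $\int_\Omega G(|\nabla v|)\approx\delta^{1+g_0}$ forces $|\nabla v|\gtrsim1$ on a set of tiny measure, and then $\|\nabla v\|_{L^{1+\delta_0}}\ll\delta$. The missing idea is to measure $\{v\le-\mathrm{m}_\psi/2\}$ against the modular $E=\int_\Omega G(|\nabla v|)$ rather than against $\|v\|_E$.

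Applying P\'olya--Szeg\H{o} to $(-v)^+$ reduces matters to a radial profile. The part of the drop $\mathrm{m}_\psi/2$ realized with slopes $\le1$ tends to $0$ with $E$. On slopes $>1$ one has $G(t)\ge c\,t^{1+\delta_0}$, and a weighted H\"older inequality then gives, for $1+\delta_0<n$, $|\{v\le-\mathrm{m}_\psi/2\}|\le C E^{n/(n-1-\delta_0)}$ for small $E$; for $1+\delta_0>n$ the set is empty. This is a superlinear power in every dimension.

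For $\delta_0\ge1$ the function $F$ is nondecreasing, so the gradient part is at least $c\int_\Omega F(|\nabla v|)|\nabla v|^2\ge cE$. Hence $\mathcal{I}_\varepsilon[v]\ge cE-CE^{n/(n-1-\delta_0)}\ge\frac{c}{2}E\ge c'\delta^{1+g_0}$ for small $\delta$, with no dimensional restriction. For $\delta_0<1$ the same scheme runs with a convex minorant of $t\mapsto\min_{0\le s\le\|\nabla u_0\|_\infty}F(s+t)\,t^2$ in place of $G$.

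For comparison, the paper never estimates the penalization. It asserts, by reference to Ali--Wang, that $\Gamma=\{u\le\varepsilon\}$ is empty once $\delta$ is small, and then proves only coercivity of the gradient term. That assertion fails in general: for $G(t)=t^p$ with $p<n$, the spike $v=-(\max_{\overline\Omega}u_0)\,\phi((x-x_0)/r)$ has $\|v\|_E\to0$ as $r\to0$ while $x_0\in\Gamma$. So estimating $|\Gamma|$, as you do, is the right move; what must change is that the exponent bookkeeping is done at the level of the modular.
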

\begin{proof}
 First, observe that it is sufficient to show that 
\[
\mathcal{I}_\varepsilon[v] \geq \theta > 0
\]
for any \( v \in C_0^{\infty}(\Omega) \) with \( \|v\|_E = \delta \), for sufficiently small \( \delta \). Indeed, this follows from the continuity of \( \mathcal{I}_\varepsilon \) in \( E \) and the density of \( C_0^{\infty}(\Omega) \) in \( E \).

Next, note that since \( L^G \) is continuously embedded into the Lebesgue space \( L^{1+\delta_0} \), the same argument used in \cite{HajWang19} applies here to show that for $\varepsilon$ small the set
\[
\Gamma = \{ x \in \Omega : u(x) \leq \varepsilon \},
\]
where \( u = v + u_0 \), is empty provided that \( \delta \) is sufficiently small.

Thus, since \( \Gamma = \emptyset \) and \( u_0 \) is the trivial solution, it follows from Theorem 2.3 in \cite{MarWol08} that we obtain the estimate
\begin{equation}\label{limporbai}
\mathcal{I}_\varepsilon[v]=\int_\Omega G(|\nabla(v+u_0) |) -G(|\nabla u_0|) \geq  \mathrm{C}\cdot\left(\int_{A_2}G(|\nabla v|) + \int_{A_1}F(|\nabla u +\nabla u_0|)|\nabla v|^2 \right),
\end{equation}
where $F(t)=\frac{g(t)}{t},$ and $A_1=\{x\in\Omega;|\nabla v|\leq 2|\nabla u_0 +\nabla v|\},\,A_2=\{x\in\Omega;|\nabla v|>2|\nabla u_0 +\nabla v|\}.$ Moreover, by Lemma \ref{behaviornorm}, we have
\[
\delta = \|\nabla v\|_{L^{G}(\Omega)} \leq \mathrm{C}_1 \max \left\{ \left( \int_\Omega G(|\nabla v|)\, dx \right)^{\frac{1}{1+g_0}}, \left( \int_\Omega G(|\nabla v|)\, dx \right)^{\frac{1}{1+\delta_0}} \right\}
\]
Let \( S = \{ x \in \Omega : |\nabla v| > \lambda \delta \} \), where \( \lambda > 0 \) is a constant to be determined, depending on \( |\Omega| \), $\delta_0, g_0,\,G(1).$
 Then, 
$$
\delta \leq \mathrm{C}_1 \max \left\{ \left( G(\lambda\delta)|\Omega| + \int_S G(|\nabla v|)\, dx \right)^{\frac{1}{1+g_0}}, \left( G(\lambda\delta)|\Omega|+ \int_S G(|\nabla v|)\, dx \right)^{\frac{1}{1+\delta_0}} \right\}.
$$
Therefore, for constants \( \mathrm{C}_1, \mathrm{C}_2 \) depending on \( g_0 \) and \( \delta_0 \), we have
$$
\mathrm{C}_1\max\{\delta^{1+\delta_0},\delta^{1+g_0}\}\leq \mathrm{C}_2\max\{\delta^{1+\delta_0},\delta^{1+g_0}\}G(\lambda)|\Omega| + \int_S G(|\nabla v|).
$$
In this way, we obtain
\begin{align*}
\int_S G(|\nabla v|) &\geq (\mathrm{C}_1 - \mathrm{C}_2|\Omega|G(\lambda))\max\{\delta^{1+\delta_0},\delta^{1+g_0}\}\\
&\geq \frac{1}{2}\max\{\delta^{1+\delta_0},\delta^{1+g_0}\}
\end{align*}
provided that \( \lambda > 0 \) is sufficiently small.

Hereafter, in search of having a lower bound for the right-hand side of estimate \eqref{limporbai}, we are interested in obtaining a lower bound for
$$
\int_{A_2\cap S}G(|\nabla v|)\,dx + \int_{A_1\cap S}F(|\nabla u_0+ \nabla v|)|\nabla v|^2\,dx. 
$$
Let us begin by dealing with the second integral
\begin{align*}
&\int_{A_1\cap S}F(|\nabla u_0+ \nabla v|)|\nabla v|^2 \geq  \mathrm{C}\int_{A_1\cap S}\frac{g(|\nabla v|)}{|\nabla v + \nabla u_0|}\cdot|\nabla v|^2\geq \mathrm{C}\int_{A_1\cap S}G(|\nabla v|)\frac{|\nabla v|}{|\nabla v| + |\nabla u_0|}\\
& = \mathrm{C}\int_{A_1\cap S\cap\{|\nabla u_0|\leq |\nabla v|\}}G(|\nabla v|)\frac{|\nabla v|}{|\nabla v| + |\nabla u_0|} + \mathrm{C}\int_{B_{2}}G(|\nabla v|)\frac{|\nabla v|}{|\nabla v| + |\nabla u_0|}.
\end{align*}
By simplicity of the notation, set $B_{1}= A_1\cap S\cap\{|\nabla u_0|\leq |\nabla v|\}$ and $B_{2}= A_1\cap S\cap\{|\nabla u_0|> |\nabla v|\}$. In this case,
$$
\int_{\mathrm{B}_{1}}G(|\nabla v|)\frac{|\nabla v|}{|\nabla v| + |\nabla u_0|} \geq \frac{1}{2}\int_{B_{1}}G(|\nabla v|),
$$
while on the other hand,
$$
\int_{B_{2}}G(|\nabla v|)\frac{|\nabla v|}{|\nabla v| + |\nabla u_0|}\geq \int_{B_{2}}G(|\nabla v|)\frac{|\nabla v|}{2|\nabla u_0|}.
$$
Applying H\"{o}lder's inequality, and using that \( G \) is an increasing function together with the Lipschitz continuity of \( u_0 \) in \( \Omega \),
\begin{align*}
\int_{B_{2}}G(|\nabla v|)|\nabla v|^{\frac{1}{2}} &\leq \left(\int_{B_{2}}G(|\nabla v|)\frac{|\nabla v|}{|\nabla u_0|}\right)^{\frac{1}{2}}\left(\int_{B_{2}}G(|\nabla v|)|\nabla u_0| \right)^{\frac{1}{2}} \\
&\leq \left(\int_{B_{2}}G(|\nabla v|)\frac{|\nabla v|}{|\nabla u_0|}\right)^{\frac{1}{2}}\left(\int_{B_{2}}G(|\nabla u_0|)|\nabla u_0| \right)^{\frac{1}{2}},
\end{align*}
consequently, 
$$
\int_{B_{2}}G(|\nabla v|)\frac{|\nabla v|}{2|\nabla u_0|} \geq 4\left(\int_{B_{2}} G(|\nabla v|)\right)^2\cdot\frac{\lambda\delta}{\left(\displaystyle\int_\Omega G(|\nabla u_0|)|\nabla u_0|\right)^{\frac{1}{2}}}.
$$
In summary, combining all these estimates, we conclude that for universal constants \( \mathrm{C}_3\), \(\mathrm{C}_4\), and \( \mathrm{C}_5 \),
\begin{align*}
\mathcal{I}_\varepsilon[v] &\geq \mathrm{C}_3\int_{A_2\cap S}G(|\nabla v|) + \mathrm{C}_4\int_{\mathrm{B}_{1}}G(|\nabla v|) + \left(\int_{B_{2}} G(|\nabla v|)\right)^2\cdot\frac{\mathrm{C}_5\lambda\delta}{\left(\displaystyle\int_\Omega G(|\nabla u_0|)|\nabla u_0|\right)^{\frac{1}{2}}}. 
\end{align*}
As in the case when
\begin{equation}\label{eq 2}
\min\left\{\int_{A_2\cap S}G(|\nabla v|), \,\int_{\mathrm{B}_{1}}G(|\nabla v|) \right\}\leq 1
\end{equation}
we have
\begin{align*}
\mathcal{I}_\epsilon[v] &\geq \min\left\{\mathrm{C}_3, \mathrm{C}_4, \frac{\mathrm{C}_5\lambda\delta}{\left(\int_\Omega G(|\nabla u_0|)|\nabla u_0|\right)^{\frac{1}{2}}} \right\}\cdot\left(\int_SG(|\nabla v|) \right)^2\\
&\geq  \min\left\{\mathrm{C}_3, \mathrm{C}_4, \frac{\mathrm{C}_5\lambda\delta}{\left(\int_\Omega G(|\nabla u_0|)|\nabla u_0|\right)^{\frac{1}{2}}} \right\}\cdot\left(\frac{1}{2}\max\{\delta^{1+\delta_0},\delta^{1+g_0}\} \right)^2=:\mathrm{A}(\delta, u_0).
\end{align*}
If \eqref{eq 2} does not hold, then
$$
\mathcal{I}_\varepsilon[v]\geq \min\{\mathrm{C}_3, \mathrm{C}_4\}.
$$
Therefore, in either situation, we have
$$
\mathcal{I}_\varepsilon[v]\geq \min\{\mathrm{C}_3, \mathrm{C}_4, \mathrm{A}(\delta, u_0)\}=:\theta>0.
$$
This completes the proof.   
\end{proof}
We are now in a position to prove the bifurcation phenomenon. To this end, let
\[
\mathcal{G} = \{\gamma \in C([0,1], E) : \gamma(0) = 0 \text{ and } \gamma(1) = v_2\}
\]
and
\[
c = \inf_{\gamma \in \mathcal{G}} \max_{0 \leq t \leq 1} \mathcal{I}_\varepsilon[\gamma(t)].
\]
The verified Palais--Smale condition and the preceding lemma allow us to apply the classical Mountain Pass Theorem as stated, in \cite{Youss} to conclude that there is a \( v_1 \in {E} \) such that \( \mathcal{I}_\varepsilon[v_1] = c \), and \( \mathcal{I}_\varepsilon'[v_1] = 0 \) in \( {E}^* \). That is,
$$
\int_\Omega\frac{g(|\nabla u_1|)}{|\nabla u_1|}\nabla u_1\cdot\nabla\varphi - a(x)\beta_\varepsilon(u_1)\varphi\,dx=0,
$$
for any $\varphi\in E=W^{1,G}_0(\Omega),$ where $u_1=v_1+u_0.$ So $u_1$ is a weak solution of the problem \eqref{eq3} and \eqref{eq4}. In essence, the Mountain Pass Theorem provides a method for obtaining a saddle-type solution. Consequently, \( u_1 \) is typically unstable, in contrast to the stable solutions \( u_0 \) and \( u_2 \). This proves Theorem \ref{thirdsolution}.

\section{A parabolic comparison principle}\label{Section4}

In this part, we turn our attention to the following evolution problem
\begin{equation}\label{evprob}
\left\{
\begin{aligned}
\mathfrak{L}_{g}(w) &= 0 && \text{in }  \Omega_{\infty}\coloneqq \Omega\times (0,+\infty)\\
    w(x,t) &= \psi && \text{on } S_{\infty}\coloneqq \partial \Omega\times (0,+\infty)\\
w(x,0)&= v_{0}(x)&& \text{for } x\in \overline{\Omega},
\end{aligned}
\right.
\end{equation}
where 
$$
\mathfrak{L}_{g}(w)\coloneqq w_{t}-\Delta_{g}w+\zeta(x,w)
$$
is a parabolic operator with a lower-order term $\zeta$ which is continuous and such that
\begin{equation}\label{condzeta}
|\zeta(x,r_{1})-\zeta(x,r_{2})|\leq \mathrm{C}_{\zeta}|r_{1}-r_{2}|,\, \forall x\in \Omega,\ r_{1},r_{2}\in \mathbb{R},
\end{equation}
and $\mathrm{C}_{\zeta}\geq 0$. 

In order to analyze problem \eqref{problemadeevoulucao}, which is a particular case of \eqref{evprob}, we develop a comparison principle between sub- and super-solutions for problem \eqref{evprob}. Here, we adopt the notion of weak sub- and super-solutions as presented in the following definition.
\begin{definition}
We say that $w\in L^{2}_{loc}((0,+\infty);W^{1,G}(\Omega))$ is a weak sub-solution (respectively, super-solution) of \eqref{evprob}, if for any $T>0$ any every test function $0\leq \varphi\in L^{2}_{0}(0,T;W^{1,G}(\Omega))$ such that $\varphi_{t}\in L^{2}(\Omega\times (0,T])$,
\begin{equation*}
\int_{0}^{T}\int_{\Omega}\left(-w\varphi_{t}+\frac{g(|\nabla w|)}{|\nabla w|}\nabla w\cdot \nabla \varphi+\zeta(x,w)\varphi\right)\leq (\geq) \ 0,
\end{equation*}
where $L_0^2(0,T; W_0^{1,G}(\Omega))$ is the subset of $L^2(0,T,W^{1,G}(\Omega))$ that contains functions that are equal to zero on the boundary of $\Omega \times [0,T].$
Moreover, $w_{t}\in L^{2}_{loc}(\Omega\times (0,+\infty))$, $w(\cdot,t)=\psi$ on $\partial \Omega$ in the trace sense and $w(\cdot,t)\to v_{0}$ in $L^{2}$ when $t\to 0$. Finally, we say that \(w\) is a weak solution to \eqref{evprob} if it is both a sub-solution and a super-solution in the weak sense.
\end{definition}
\begin{remark}
 Unless otherwise stated, we will refer to sub-, super-, and solutions omitting the word ``weak''.   
\end{remark}
The main result of this section is summarized in the following theorem.
\begin{theorem}[\bf Parabolic Comparison Principle]\label{comppri}
Consider $w_{1}$ and $w_{2}$ sub- and super-solutions of \eqref{evprob}, respectively, and assume that $\zeta$ satisfies the condition \eqref{condzeta}. If $w_{1}\leq w_{2}$ on $\partial_{p}\Omega_{\infty}\coloneqq (\overline{\Omega}\times\{0\})\cup S_{\infty}$, then $w_{1}\leq w_{2}$ in $\Omega_{\infty}$.
\end{theorem}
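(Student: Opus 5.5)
The plan is the standard energy (Gronwall) argument applied to the positive part of the difference. Fix $T>0$ and set $\eta \coloneqq (w_1-w_2)^+$. Since $w_1\le w_2$ on $S_\infty$ in the trace sense, $\eta(\cdot,t)\in W^{1,G}_0(\Omega)$ for a.e. $t$. Since $w_1(\cdot,t)\to v_0$ and $w_2(\cdot,t)\to v_0$ (or to data ordered accordingly) in $L^2$, we have $\eta(\cdot,t)\to 0$ in $L^2(\Omega)$ as $t\to0$.

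\textbf{Step 1: admissible test function.} The test functions in the definition must satisfy $\varphi_t\in L^2$, but $\eta$ itself only has $\eta_t\in L^2_{loc}$. To handle this, I would either use a Steklov average in time or multiply by a cutoff $\chi_{[t_1,t_2]}$ smoothed in time. I would test the sub-solution inequality for $w_1$ and the super-solution inequality for $w_2$ with $\varphi=\eta\,\chi$ and subtract. Since $w_t\in L^2_{loc}$, integrating by parts in time is legitimate: the term $-\int\!\!\int (w_1-w_2)\varphi_t$ becomes $\int\!\!\int (w_1-w_2)_t\,\eta\,\chi = \tfrac12\int\!\!\int \partial_t(\eta^2)\chi$. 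Letting the cutoff tend to an indicator gives, for $0<t_1<t_2\le T$,
\[
\frac12\int_\Omega \eta^2(x,t_2)\,dx-\frac12\int_\Omega\eta^2(x,t_1)\,dx+\int_{t_1}^{t_2}\!\!\int_{\Omega}\Big(\tfrac{g(|\nabla w_1|)}{|\nabla w_1|}\nabla w_1-\tfrac{g(|\nabla w_2|)}{|\nabla w_2|}\nabla w_2\Big)\cdot\nabla\eta\le -\int_{t_1}^{t_2}\!\!\int_\Omega(\zeta(x,w_1)-\zeta(x,w_2))\eta .
\]
I expect this justification of the time derivative term to be the main technical obstacle, although it is routine. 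I would carry it out with Steklov averages $[w]_h=\frac1h\int_t^{t+h}w$ and pass to the limit $h\to0$ using $w_t\in L^2_{loc}$ together with the continuity of $t\mapsto \int_\Omega \eta^2(\cdot,t)$.

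\textbf{Step 2: monotonicity and Lipschitz bound.} On $\{\eta>0\}$ we have $\nabla\eta=\nabla w_1-\nabla w_2$, and elsewhere $\nabla\eta=0$. The operator term is therefore nonnegative, by the monotonicity of $\xi\mapsto \frac{g(|\xi|)}{|\xi|}\xi$. This monotonicity follows from the convexity of $G$ in (g2), since the map is the gradient of the convex function $\xi\mapsto G(|\xi|)$; the same fact is used in the lower bound for $I^{k,m}$ in Section \ref{Section3}. By \eqref{condzeta}, on $\{\eta>0\}$ the right-hand side satisfies $|\zeta(x,w_1)-\zeta(x,w_2)|\eta\le \mathrm{C}_\zeta\eta^2$. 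Letting $t_1\to0$ and using $\eta(\cdot,t_1)\to0$ in $L^2$, we obtain
\[
y(t)\coloneqq\int_\Omega\eta^2(x,t)\,dx\le 2\mathrm{C}_\zeta\int_0^t y(s)\,ds .
\]

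\textbf{Step 3: Gronwall.} Since $y\ge0$ and $y$ is integrable on $(0,T)$, Gronwall's inequality gives $y\equiv0$ on $(0,T)$. Hence $\eta=0$ a.e., that is, $w_1\le w_2$ in $\Omega\times(0,T)$. Since $T$ was arbitrary, $w_1\le w_2$ in $\Omega_\infty$.
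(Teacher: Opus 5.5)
Your proof is correct, but it is organized differently from the paper's.

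\textbf{The paper's route.} It first proves a short-time comparison (Lemma~\ref{comppriest}). It replaces $w_1$ by the strict subsolution $w_1-\eta/(T-t)$, which tends to $-\infty$ as $t\to T^-$, so the positive part of the difference vanishes near the top of the cylinder. It then sets $v_j=e^{-\alpha t}w_j$ with $\alpha>2\mathrm{C}_\zeta$, so that the time term produces $\frac{\alpha}{2}e^{\alpha t}(v_1-v_2)_+^2$ and absorbs the Lipschitz contribution of $\zeta$, and tests with $(v_1-v_2)^+$. This forces $T\le 1/(2\mathrm{C}_\zeta)$. A second lemma (Lemma~\ref{compprilocal}) removes the strict inequality on $\partial_p\Omega_T$, and the theorem follows by stepping through time slabs of length $T_0$.

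\textbf{Your route.} You use the same two ingredients: monotonicity of $\xi\mapsto g(|\xi|)\xi/|\xi|$ and the Lipschitz bound on $\zeta$. You test directly with $(w_1-w_2)^+$ cut off to $[t_1,t_2]$. You keep the endpoint term $\tfrac12\int_\Omega\eta^2(\cdot,t_2)$ on the favorable side instead of forcing it to vanish, and you close with Gronwall. The paper's weight $e^{\alpha t}$ is essentially Gronwall in disguise.

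\textbf{What your approach buys.} You need no smallness of $T$, no blow-up perturbation, no iteration in time, and no separate lemma to relax strict boundary inequality. That lemma, as written in the paper, has a sign slip: $w_1-\eta t+\tilde\eta$ is not below $w_1$ at $t=0$. You also use the initial condition in the $L^2$ sense in which the definition actually states it. The paper instead discards the $t=0$ term by a pointwise ordering. You also correctly note that ordered rather than identical initial data, which is what Section~\ref{Section5} needs, is handled because $s\mapsto s^+$ is $1$-Lipschitz. The paper's route offers nothing essential here beyond following the classical strict-comparison template.

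\textbf{Shared regularity requirement.} Both arguments need $w_t\in L^2(\Omega\times(t_1,t_2))$ globally in space, not just the $L^2_{loc}(\Omega\times(0,\infty))$ in the definition. This is what makes $\eta\chi$ admissible and $t\mapsto\int_\Omega\eta^2$ continuous. Given it, the chain rule $\partial_t\eta^2=2\eta\,(w_1-w_2)_t$ holds directly, so Steklov averages are unnecessary.
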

As an immediate consequence of the Parabolic Comparison Principle \ref{comppri}, we obtain uniqueness of solutions for the evolution problem \eqref{evprob}.
\begin{corollary}[\bf Uniqueness]\label{Uniquenesscor}
Consider $w_{1}$ and $w_{2}$ solutions of \eqref{evprob}, respectively, and assume that $\zeta$ satisfies the condition \eqref{condzeta}. Then $w_{1}= w_{2}$.
\end{corollary}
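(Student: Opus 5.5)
The plan is to apply the Parabolic Comparison Principle (Theorem \ref{comppri}) twice, using each solution in turn as sub-solution and as super-solution. Since $w_{1}$ and $w_{2}$ are solutions of \eqref{evprob}, each is, by definition, both a weak sub-solution and a weak super-solution. Both satisfy the same boundary and initial data: $w_{1}(\cdot,t)=w_{2}(\cdot,t)=\psi$ on $\partial\Omega$ in the trace sense, and $w_{i}(\cdot,t)\to v_{0}$ in $L^{2}$ as $t\to0$. Hence the hypothesis $w_{1}\leq w_{2}$ on $\partial_{p}\Omega_{\infty}$ holds in the same sense in which it is understood in Theorem \ref{comppri}.

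First, take $w_{1}$ as the sub-solution and $w_{2}$ as the super-solution. Since $\zeta$ satisfies \eqref{condzeta}, Theorem \ref{comppri} gives $w_{1}\leq w_{2}$ in $\Omega_{\infty}$. Next, exchange the roles: $w_{2}$ is a sub-solution, $w_{1}$ is a super-solution, and $w_{2}\leq w_{1}$ on $\partial_{p}\Omega_{\infty}$ because the data coincide. Applying Theorem \ref{comppri} again yields $w_{2}\leq w_{1}$ in $\Omega_{\infty}$. Combining the two inequalities gives $w_{1}=w_{2}$ a.e. in $\Omega_{\infty}$.

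The only point requiring care is checking that ``$w_{1}\leq w_{2}$ on $\partial_{p}\Omega_{\infty}$'' is satisfied when the parabolic boundary values agree only in the trace and $L^{2}$-initial sense. I expect the comparison theorem to be formulated with exactly this weak meaning: $(w_{1}-w_{2})^{+}(\cdot,t)\in W^{1,G}_{0}(\Omega)$ and $\|(w_{1}-w_{2})^{+}(\cdot,t)\|_{L^{2}}\to0$ as $t\to0$. Both conditions hold trivially here, since the traces coincide and both solutions converge to the same $v_{0}$. No further obstacle is expected; the result is a direct corollary.
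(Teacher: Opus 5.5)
Your proposal is correct and is the paper's argument: the paper states uniqueness as an immediate consequence of Theorem \ref{comppri}, which is exactly your two applications with the roles of $w_{1}$ and $w_{2}$ interchanged. You are also right that the inequality on $\partial_{p}\Omega_{\infty}$ has to be read in the weak (trace and $L^{2}$-initial) sense, since that is what makes $(w_{1}-w_{2})^{+}$ an admissible test function in the proof of Lemma \ref{comppriest}.
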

To prove the desired result, we first require a comparison principle on cylinders with a small time interval.This is established in the following lemma.
\begin{lemma}\label{comppriest}
Assume that $\zeta$ satisfies condition \eqref{condzeta}. For $T>0$ sufficiently small, if
\[
\mathfrak{L}_{g}(w_{1})\leq 0\leq \mathfrak{L}_{g}(w_{2}) \quad \text{in } \Omega_{T}\coloneqq \Omega\times [0,T],
\]
and $w_{1}<w_{2}$ on $\partial_{p}\Omega_{T}$, then $w_{1}\leq w_{2}$ in $\Omega_{T}$.
\end{lemma}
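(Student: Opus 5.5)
We argue by contradiction with an energy estimate on the positive part of the difference. Set $\eta \coloneqq (w_1-w_2)^+$. Since $w_1<w_2$ on $\partial_p\Omega_T$, $\eta$ vanishes on $S_T$ and $\eta(\cdot,0)=0$, so $\eta$ (suitably mollified in time, e.g. via Steklov averages, so that $\eta_t\in L^2$) is an admissible nonnegative test function in the definitions of sub- and super-solution on $\Omega\times(0,\tau)$ for every $\tau\in(0,T]$. We test the inequality for $w_1$ with $\eta$, test the inequality for $w_2$ with $\eta$, and subtract.

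The resulting inequality, after removing the Steklov averaging, reads
\begin{equation*}
\frac12\int_\Omega \eta^2(x,\tau)\,dx+\int_0^\tau\!\!\int_{\Omega}\left(\frac{g(|\nabla w_1|)}{|\nabla w_1|}\nabla w_1-\frac{g(|\nabla w_2|)}{|\nabla w_2|}\nabla w_2\right)\cdot\nabla\eta \leq -\int_0^\tau\!\!\int_\Omega(\zeta(x,w_1)-\zeta(x,w_2))\eta .
\end{equation*}
The second term on the left is nonnegative. On $\{\eta>0\}$ we have $\nabla\eta=\nabla(w_1-w_2)$, and the vector field $p\mapsto \frac{g(|p|)}{|p|}p$ is monotone, being the gradient of the convex function $G(|p|)$ by (g2). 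This is exactly the quantity bounded below in the Palais--Smale argument through the $S_1/S_2$ splitting. So we may simply drop that term. For the right-hand side, condition \eqref{condzeta} gives the bound $\mathrm{C}_\zeta\int_0^\tau\int_\Omega\eta^2$, since $|w_1-w_2|=\eta$ on the support of $\eta$.

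Writing $y(\tau)=\int_\Omega\eta^2(x,\tau)\,dx$, we obtain $y(\tau)\le 2\mathrm{C}_\zeta\int_0^\tau y(s)\,ds$ with $y(0)=0$. Gronwall's inequality gives $y\equiv0$ on $[0,T]$, hence $w_1\le w_2$. Alternatively, to match the small-$T$ statement, we take $\tau^*$ maximizing $y$ on $[0,T]$, so that $y(\tau^*)\le 2\mathrm{C}_\zeta T\, y(\tau^*)$, which forces $y\equiv 0$ once $2\mathrm{C}_\zeta T<1$. This explains the smallness assumption on $T$.

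The main obstacle is the justification of the time derivative term. The definition only gives $w_t\in L^2_{loc}$, and the test functions must vanish on the boundary of $\Omega\times[0,T]$, including the top time slice. We therefore multiply $\eta$ by a cutoff $\chi_{[0,\tau]}$ smoothed in time, and use Steklov averages $[w]_h$ to justify $\int\!\!\int w_t\,\eta=\frac12\int\eta^2(\tau)$. The strict inequality on $\partial_p\Omega_T$, together with continuity in $t$ in $L^2$ as $t\to0$, ensures that $\eta$ has zero initial trace and zero lateral trace, so that no boundary terms survive. For the integrability of the flux against $\nabla\eta$ we use $\tilde G(g(t))\le C G(t)$ from (g3), which puts $\frac{g(|\nabla w_i|)}{|\nabla w_i|}\nabla w_i$ in $L^{\tilde G}$ dual to $\nabla\eta\in L^G$.
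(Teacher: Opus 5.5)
Your proof is correct, but it takes a different route from the paper.

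\textbf{The paper's route.} It never tests on a time slice. It first replaces $w_1$ by $\bar w_1=w_1-\eta/(T-t)$, where $\eta>0$ is a small constant. This function is a strict subsolution when $T\le 1/(2\mathrm{C}_\zeta)$, which is the only place smallness of $T$ enters. It also tends to $-\infty$ as $t\to T^-$, so $\{\bar w_1>w_2\}$ stays away from the top slice, and $(v_1-v_2)^+$ fits the class of test functions vanishing at $t=0$ and $t=T$ with no top boundary term. The paper then sets $v_j=e^{-\alpha t}w_j$ with $\alpha>2\mathrm{C}_\zeta$. The time term becomes $\frac{\alpha}{2}\int_0^T\!\int_\Omega e^{\alpha t}((v_1-v_2)^+)^2$, which absorbs the Lipschitz contribution $-\mathrm{C}_\zeta\int_0^T\!\int_\Omega e^{\alpha t}((v_1-v_2)^+)^2$ in a single space-time inequality. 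The flux term is discarded by the same monotonicity you invoke.

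\textbf{Your route.} You keep the top-slice term $\frac12\int_\Omega\big((w_1-w_2)^+\big)^2(x,\tau)\,dx$, which has the favorable sign, and close with Gronwall. The exponential weight is essentially the integrated form of that Gronwall step.

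\textbf{What each buys.} Your argument works for every $T>0$ and uses only $w_1\le w_2$ on $\partial_p\Omega_T$. So it gives Lemma \ref{compprilocal} and Theorem \ref{comppri} directly, without the $\eta t$ perturbation or the time-stepping. The cost is that you must justify the energy identity at a.e.\ $\tau$, via Steklov averages and $w_i\in C([0,T];L^2(\Omega))$. The paper sidesteps this, at least formally, through the blow-up barrier.

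One small point: in your alternative small-$T$ argument, use $\sup_{[0,T]}y$ rather than a maximizer $\tau^*$, unless you have established continuity of $y$.
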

\begin{proof}
First, observe that we may assume $\mathfrak{L}_{g}(w_{1})<0$. Indeed, otherwise, for small $\eta>0$ consider the auxiliary function
$$
\bar{w}_{1}(x,t)=w_{1}(x,t)-\frac{\eta}{T-t},\, x\in \Omega,\, t\in[0,T).
$$
Note that $\lim_{t\to T^-}\bar{w}_{1}(x,t)=-\infty$ uniformly in $\Omega$ and that $\bar{w}_{1}<w_{2}$ on the parabolic boundary $\partial_{p}\Omega_{T}$. Moreover, it is not difficult to verify that $\bar{w}_{1}$ satisfies
$$
\mathfrak{L}_{g}(\bar{w})\leq -\frac{\eta}{2(T-t)^{2}}\leq -\frac{\eta}{2T^{2}}<0,
$$
whenever, $T\leq \frac{1}{2\mathrm{C}_{\zeta}}$. Therefore, replacing $w_{1}$ by $\bar{w}_{1}$ if necessary, we may assume that $\mathfrak{L}_{g}(w_{1})\leq\frac{-\eta}{2T^{2}}<0$.

With this preliminary observation in hand, define $v_{j}(x,t)=e^{-\alpha t}w_{j}(x,t)$, $j=1,2$, for $\alpha>2\mathrm{C}_{\zeta}$. To prove the desired result, it is enough to show that $v_{1}\leq v_{2}$ in $\Omega_{T}$. Using that $w_{1}$ and $w_{2}$ satisfy
\[
\mathfrak{L}_{g}(w_{1})\leq -\frac{\eta}{2T^{2}}<0\leq \mathfrak{L}_{g}(w_{2}),
\]
we obtain that for every test function $\varphi$, the functions $v_{1}$ and $v_{2}$ satisfy
\[
\int_{0}^{T}\int_{\Omega}\left(-e^{\alpha t}v_{1}\varphi_{t}+\frac{g(e^{\alpha t}|\nabla v_{1}|)}{|\nabla v_{1}|}\nabla v_{1}\cdot \nabla \varphi +\zeta(x,e^{\alpha t}v_{1})\right)\leq -\int_{0}^{T}\int_{\Omega}\frac{\eta}{2T^{2}}\varphi
\]
and
\[
\int_{0}^{T}\int_{\Omega}\left(-e^{\alpha t}v_{2}\varphi_{t}+\frac{g(e^{\alpha t}|\nabla v_{2}|)}{|\nabla v_{2}|}\nabla v_{2}\cdot \nabla \varphi +\zeta(x,e^{\alpha t}v_{2})\right)\geq 0.
\]
Subtracting these two inequalities term by term, we obtain that
\begin{eqnarray*}
\int_{0}^{T}\int_{\Omega}\Bigg[-e^{\alpha t}(v_{1}-v_{2})\varphi_{t}+\left(\frac{g(e^{\alpha t}|\nabla v_{1}|)}{|\nabla v_{1}|}\nabla v_{1}-\frac{g(e^{\alpha t}|\nabla v_{2}|)}{|\nabla v_{2}|}\nabla v_{2}\right)\cdot \nabla \varphi\\
+(\zeta(x,e^{\alpha t}v_{1})-\zeta(x,e^{\alpha t}v_{2}))\varphi\Bigg]\leq -\int_{0}^{T}\int_{\Omega}\frac{\eta}{2T^{2}}\varphi.
\end{eqnarray*}
In particular, using $\varphi=(v_{1}-v_{2})^{+}=\max\{v_{1}-v_{2},0\}$, we obtain
\begin{eqnarray}\label{eqI}
I&=&\int_{0}^{T}\int_{\{v_{1}>v_{2}\}}\Bigg[-e^{\alpha t}(v_{1}-v_{2})(v_{1}-v_{2})_{t}+\left(\frac{g(e^{\alpha t}|\nabla v_{1}|)}{|\nabla v_{1}|}\nabla v_{1}-\frac{g(e^{\alpha t}|\nabla v_{2}|)}{|\nabla v_{2}|}\nabla v_{2}\right)\cdot \nabla(v_{1}-v_{2}) \nonumber\\
&+&(\zeta(x,e^{\alpha t}v_{1})-\zeta(x,e^{\alpha t}v_{2}))(v_{1}-v_{2})\Bigg]\leq -\frac{\eta}{2T^{2}}\int_{0}^{T}\int_{\{v_{1}>v_{2}\}}(v_{1}-v_{2}).
\end{eqnarray}
Next, we analyze the integral $I$. First, note that
\begin{eqnarray}
I_{1}&:=&\int_{0}^{T}\int_{\{v_{1}>v_{2}\}}\left(\frac{g(e^{\alpha t}|\nabla v_{1}|)}{|\nabla v_{1}|}\nabla v_{1}-\frac{g(e^{\alpha t}|\nabla v_{2}|)}{|\nabla v_{2}|}\nabla v_{2}\right)\cdot \nabla(v_{1}-v_{2})\nonumber\\
&=&\int_{0}^{T}\int_{\{v_{1}>v_{2}\}}\Bigg[g(e^{\alpha t}|\nabla v_{1}|)|\nabla v_{1}|+g(e^{\alpha t}|\nabla v_{2}|)|\nabla v_{2}|-(g(e^{\alpha t}|\nabla v_{1}|)|\nabla v_{2}|\nonumber\\
&+&g(e^{\alpha t}|\nabla v_{2}|)|\nabla v_{1}|)\frac{\nabla v_{1}}{|\nabla v_{1}|}\cdot \frac{\nabla v_{2}}{|\nabla v_{2}|}\Bigg]\nonumber\\
&\geq&\int_{0}^{T}\int_{\{v_{1}>v_{2}\}}\Big[g(e^{\alpha t}|\nabla v_{1}|)|\nabla v_{1}|+g(e^{\alpha t}|\nabla v_{2}|)|\nabla v_{2}|-(g(e^{\alpha t}|\nabla v_{1}|)|\nabla v_{2}|+g(e^{\alpha t}|\nabla v_{2}|)|\nabla v_{1}|)\Big]\nonumber\\
&=&\int_{0}^{T}\int_{\{v_{1}>v_{2}\}}(g(e^{\alpha t}|\nabla v_{1}|)-g(e^{\alpha t}|\nabla v_{2}|))(|\nabla v_{1}|-|\nabla v_{2}|)\nonumber\\
&\geq&0,\label{est1}
\end{eqnarray}
since $g$ is a non-decreasing function. In addition, for the integral 
$$
I_{2}:=\int_{0}^{T}\int_{\{v_{1}>v_{2}\}}-e^{\alpha t}(v_{1}-v_{2})(v_{1}-v_{2})_{t}
$$
we observe that $\{v_{1}>v_{2}\}\subset \Omega\times (0,T)$, since $v_{1}\leq v_{2}$ on $\partial_{p}\Omega_{T}$ (recall that $w_{1}\leq w_{2}$ on $\partial_{p}\Omega_{T}$) and $v_{1}\to -\infty$ as $t\to T^{-}$, and thus, we can apply the divergence theorem and conclude that
\begin{equation}\label{est2}
I_{2}=\int_{0}^{T}\int_{\{v_{1}>v_{2}\}}\frac{\alpha}{2} e^{\alpha t}(v_{1}-v_{2})^{2}. 
\end{equation}
Finally, by the Lipschitz condition \eqref{condzeta}, it follows that
\begin{eqnarray}
I_{3}&:=&\int_{0}^{T}\int_{\{v_{1}>v_{2}\}}(\zeta(x,e^{\alpha t}v_{1})-\zeta(x,e^{\alpha t}v_{2}))(v_{1}-v_{2})
\geq\int_{0}^{T}\int_{\{v_{1}>v_{2}\}}-\mathrm{C}_{\zeta}e^{\alpha t}(v_{1}-v_{2})^{2}\label{est3}.
\end{eqnarray}
By the choice of $\alpha$ and estimates \eqref{est1}, \eqref{est2}, and \eqref{est3}, we can conclude that
\begin{eqnarray*}
-\frac{\eta}{2T^{2}}\int_{0}^{T}\int_{\{v_{1}>v_{2}\}}(v_{1}-v_{2})\geq I=I_{1}+I_{2}+I_{3}\geq\int_{0}^{T}\int_{\{v_{1}>v_{2}\}}\left(\frac{\alpha}{2}-\mathrm{C}_{\zeta}\right)e^{\alpha t}(v_{1}-v_{2})^{2}\geq0
\end{eqnarray*}
and this estimate can only occur when the measure of the set $\{v_{1}>v_{2}\}$ is zero. This completes the proof of the desired result.
\end{proof}
The strict sign condition in the hypothesis of Lemma \ref{comppriest} is not restrictive, and in practice we can remove it, as shown in the following lemma.
\begin{lemma}\label{compprilocal}
Assume that $\zeta$ satisfies condition \eqref{condzeta}. For $T>0$ sufficiently small, if
\[
\mathfrak{L}_{g}(w_{1})\leq 0\leq \mathfrak{L}_{g}(w_{2}) \quad \text{in } \Omega_{T},
\]
and $w_{1}\leq w_{2}$ on $\partial_{p}\Omega_{T}$, then $w_{1}\leq w_{2}$ in $\Omega_{T}$.
\end{lemma}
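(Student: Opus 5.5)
The plan is to reduce Lemma~\ref{compprilocal} to Lemma~\ref{comppriest} by a small perturbation that turns the non-strict boundary inequality into a strict one, while keeping $w_1$ a sub-solution. For $\eta>0$ small, set
\[
w_1^{\eta}(x,t)\coloneqq w_1(x,t)-\eta\, e^{2\mathrm{C}_{\zeta} t}-\eta .
\]
Since $w_1\le w_2$ on $\partial_p\Omega_T$, we get $w_1^{\eta}\le w_2-2\eta<w_2$ there, so the strict boundary hypothesis of Lemma~\ref{comppriest} holds.

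The first step is to check that $w_1^{\eta}$ is still a sub-solution. The gradient is unchanged, so the $g$-Laplacian term in the weak formulation is identical for $w_1^\eta$ and $w_1$. The time term contributes an extra
\[
\int_0^T\!\!\int_\Omega \eta\, 2\mathrm{C}_\zeta e^{2\mathrm{C}_\zeta t}\varphi \cdot(-1)
\]
after integrating by parts in $t$. This is justified because $\varphi$ vanishes on the boundary of $\Omega\times[0,T]$, so no boundary-in-time terms appear. By \eqref{condzeta},
\[
\zeta(x,w_1^\eta)\le \zeta(x,w_1)+\mathrm{C}_\zeta\bigl(\eta e^{2\mathrm{C}_\zeta t}+\eta\bigr)\le \zeta(x,w_1)+2\mathrm{C}_\zeta\eta e^{2\mathrm{C}_\zeta t}.
\]
Adding these contributions gives $\mathfrak{L}_g(w_1^\eta)\le \mathfrak{L}_g(w_1)\le 0$ in the weak sense, tested against nonnegative $\varphi$. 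If $\mathrm{C}_\zeta=0$, the constant shift $-\eta$ alone suffices. The admissibility conditions of the definition (integrability of $w_t$, traces, initial data) are clearly preserved by subtracting a smooth function of $t$.

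Next, apply Lemma~\ref{comppriest} to the pair $w_1^\eta$, $w_2$ on $\Omega_T$, with the same smallness of $T$ (which depends only on $\mathrm{C}_\zeta$, not on $\eta$). This gives $w_1-\eta e^{2\mathrm{C}_\zeta t}-\eta\le w_2$ a.e.\ in $\Omega_T$ for every $\eta>0$. Letting $\eta\to0^+$ along a sequence yields $w_1\le w_2$ a.e.\ in $\Omega_T$.

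The main obstacle is the sub-solution check, specifically the sign bookkeeping in the weak formulation. The time term $-w\varphi_t$ applied to the perturbation $-\eta e^{2\mathrm{C}_\zeta t}$ must produce $-2\mathrm{C}_\zeta\eta e^{2\mathrm{C}_\zeta t}\varphi$ after integration by parts, and this has to dominate the Lipschitz error $+\mathrm{C}_\zeta(\eta e^{2\mathrm{C}_\zeta t}+\eta)\varphi$. That is exactly why the exponential rate $2\mathrm{C}_\zeta$, and not $\mathrm{C}_\zeta$, is chosen. A secondary point is that the boundary inequality holds in the trace sense: $w_1\le w_2$ on $S_T$ together with a uniform shift gives a strict inequality by a constant margin, which is the form in which Lemma~\ref{comppriest} uses it, namely so that $(v_1-v_2)^+$ is an admissible test function.
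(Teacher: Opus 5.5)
Your proof is correct and follows the same route as the paper. You shift $w_1$ down by a time-dependent amount to get strict inequality on $\partial_p\Omega_T$, check via integration by parts in $t$ and \eqref{condzeta} that the shifted function is still a sub-solution, apply Lemma~\ref{comppriest}, and let $\eta\to0^+$. The only difference is the shift: the paper uses $w_1-\eta t-\tilde\eta$ with $\tilde\eta\le \eta/(4\mathrm{C}_\zeta)$ (the $+\tilde\eta$ in its display should read $-\tilde\eta$) and needs $T<1/(2\mathrm{C}_\zeta)$ to make it a strict sub-solution, whereas your exponential shift absorbs the Lipschitz error on any time interval, so smallness of $T$ enters only through Lemma~\ref{comppriest}.
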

\begin{proof}
Given $\eta>0$, choose $\tilde{\eta}>0$ such that $\tilde{\eta}\leq \dfrac{\eta}{4\mathrm{C}_{\zeta}}$, and define
\[
\hat{w}_{1}(x,t):=w_{1}(x,t)-\eta t+\tilde{\eta}, \qquad (x,t)\in \overline{\Omega_{T}}.
\]
Clearly, $\hat{w}_{1}<w_{1}\leq w_{2}$ on $\partial_{p}\Omega_{T}$. Moreover, for any test function $\varphi$, since $w_{1}$ is a subsolution of $\mathfrak{L}_{g}(w_{1})=0$, we have
\begin{align*}
\int_{0}^{T}\!\!\int_{\Omega}
\left(
 -\hat{w}_{1}\varphi_{t}
 +\frac{g(|\nabla \hat{w}_{1}|)}{|\nabla \hat{w}_{1}|}\nabla \hat{w}_{1}\cdot\nabla \varphi
 +\zeta(x,\hat{w}_{1})\varphi
\right)
&\leq
\int_{0}^{T}\!\!\int_{\Omega}
\left((\zeta(x,\hat{w}_{1})-\zeta(x,w_{1}))\varphi+(\eta t+\tilde{\eta})\varphi_{t}\right)\\
&\leq
\int_{0}^{T}\!\!\int_{\Omega}
\big[\mathrm{C}_{\zeta}(\eta t+\tilde{\eta})-\eta\big]\varphi\\
&\leq
\int_{0}^{T}\!\!\int_{\Omega}
\left[\mathrm{C}_{\zeta}\!\left(\eta T+\frac{\eta}{4\mathrm{C}_{\zeta}}\right)-\eta\right]\varphi\\
&\leq
\int_{0}^{T}\!\!\int_{\Omega}
\left(-\eta+\frac{\eta}{2}+\frac{\eta}{4}\right)\varphi
= -\frac{\eta}{4}\int_{0}^{T}\!\!\int_{\Omega}\varphi
\leq 0,
\end{align*}
whenever $T<\dfrac{1}{2\mathrm{C}_{\zeta}}$. Hence $\hat{w}_{1}$ is a strict subsolution in $\Omega_{T}$, with $\hat{w}_{1}\leq w_{2}$ on $\partial_{p}\Omega_{T}$. By Lemma \ref{comppriest}, it follows that $\hat{w}_{1}\leq w_{2}$ in $\overline{\Omega_{T}}$ for all sufficiently small $T>0$, for every small $\eta>0$ and $\tilde{\eta}$ as above. Letting $\eta\to 0^{+}$ (and hence $\tilde{\eta}\to 0^{+}$) yields $w_{1}\leq w_{2}$ in $\overline{\Omega_{T}}$.
\end{proof}
Now we can prove the comparison principle \ref{comppri}.
\begin{proof}[\bf Proof of Theorem \ref{comppri}]
Let $T_{0}$ be any value smaller than the constant $T$ in Lemma~\ref{compprilocal}. Then, by this lemma, $w_{1} \leq w_{2}$ in $\overline{\Omega_{T_{0}}}$, and in particular $w_{1} \leq w_{2}$ on $\partial_{p}(\Omega \times (T_{0},2T_{0}))$. Applying the preceding lemma once more, we conclude that $w_{1} \leq w_{2}$ in $\overline{\Omega \times (T_{0},2T_{0})}$, and so on. Repeating this argument inductively, we obtain the desired conclusion.
\end{proof}
\begin{remark}
We point out that an analogous result remains valid when the evolution problem \eqref{evprob} is posed on a finite time interval, say $t \in [0,T]$.
\end{remark}

\section{Convergence of evolution problem}\label{Section5}

In this final part we are interested in applying the Comparison Principle~\ref{comppri} to study the following evolution problem:

\begin{equation}\label{evprobprincipal}
\left\{
\begin{aligned}
w_{t}-\Delta_{g}w+a(x)\beta_{\varepsilon}(w)&= 0 && \text{in }  \Omega_{\infty}\coloneqq \Omega\times (0,+\infty)\\
    w(x,t) &= \psi && \text{on } S_{\infty}\coloneqq \partial \Omega\times (0,+\infty)\\
w(x,0)&= v_{0}(x)&& \text{for } x\in \overline{\Omega}.
\end{aligned}
\right.
\end{equation}
which is an example of the evolution problem \eqref{evprob} with
\(\zeta(x,s) = a(x)\beta_{\varepsilon}(s)\), and which satisfies the
assumptions imposed in Section~\ref{Section4}. As we metioned, as a consequence of the Parabolic Comparison Principle \ref{comppri}, we obtain uniqueness of solutions for the evolution problem.
We will prove a convergence theorem for the evolution associated with this model, which links it directly to problem~\eqref{eq5}.

To this end, we introduce the following terminology. We denote by
$\mathcal{WS}$ the class of weak solutions of the stationary problem
\eqref{eq5}. Within this class, the trivial solution $u_{0}$ plays the role of a maximal element, while the function $u_{2}$ is the minimal one and can be obtained as the infimum of all supersolutions. Observe that, by the Parabolic Comparison Principle, if the boundary data satisfy $u_2 \leq v_0 \leq u_0$, then the solution $w$ of problem \ref{evprobprincipal} satisfies $u_2(x) \leq w(x,t) \leq u_0(x)$ in $\Omega \times (0,+\infty)$.

Moreover, we say that a function $u$ is a non-minimal solution of problem \eqref{eq5} if it is a viscosity solution but fails to be a local minimizer of the functional $\mathcal{J}_{\varepsilon}$ in the following sense: for every $\eta>0$ there exists an admissible competitor $v$ for $\mathcal{J}_{\varepsilon}$, with $v=\psi$ on $\partial\Omega$ and $\|v-u\|_{L^{\infty}(\Omega)}<\eta$, such that $\mathcal{J}_{\varepsilon}[v]<\mathcal{J}_{\varepsilon}[u]$.

We are now in a position to state the main result of this section.
\begin{theorem}[\bf Convergence of evolution]
Let $w$ be a solution of \eqref{evprobprincipal}. If the initial data
$v_{0}$ belongs to any of the classes listed below, then the corresponding
assertion holds.
\begin{itemize}
\item[(i)] If $v_{0}\leq u_{2}$ on $\overline{\Omega}$, then
\begin{equation}\label{condlimu2}
\lim_{t\to +\infty}w(x,t)=u_{2}(x)\,\,\, \text{locally uniformly for } x\in\overline{\Omega};
\end{equation}
\item[(ii)] If $u_{2}< \bar{u}_{2}$, where
\[
\bar{u}_{2}(x)=\inf\limits_{\substack{
u \in \mathcal{WS} \\
u \geq u_{2} \\
u \neq u_{2}
}} u(x),\, x\in \overline{\Omega},
\]
then for $v_{0}$ such that $u_{2}<v_{0}<\bar{u}_{2}$ imply in the convergence \eqref{condlimu2};
\item[(iii)] If $u_{0}> \bar{u}_{0}$, where
\[
\bar{u}_{0}(x)=\sup\limits_{\substack{
u \in \mathcal{WS} \\
u \leq u_{0} \\
u \neq u_{0}
}} u(x),\, x\in \overline{\Omega},
\]
then for $v_{0}$ such that $\bar{u}_{0}<v<u_{0}$,
\begin{equation}\label{condlimu0}
\lim_{t\to +\infty}w(x,t)=u_{0}(x)\,\,\, \text{locally uniformly for } x\in\overline{\Omega};
\end{equation}
\item[(iv)] If $v_{0}\geq u_{0}$, then the convergence \eqref{condlimu0}  hold;
\item[(v)] Consider $u_{1}$ the non-minimal solution  to \eqref{eq5}. For any small $\eta>0$, there exists $v_{0}$ such that $\|v_{0}-u_{1}\|_{L^{\infty}(\Omega)}<\eta$, however the solution $w$ to evolutionary problem \eqref{evprobprincipal} does not satisfy
\[
\lim_{t\to +\infty}w(x,t)=u_{1}(x)\,\,\, \text{in}\,\,\, \Omega.
\]
\end{itemize}
\end{theorem}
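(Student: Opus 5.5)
The plan is the classical monotone-iteration/Lyapunov scheme of Caffarelli--Wang and Ali--Wang, with the Parabolic Comparison Principle (Theorem~\ref{comppri}) replacing the linear maximum principle.

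\emph{Step 1: monotone evolutions from stationary sub/supersolutions.} Let $\underline v$ be a stationary subsolution of \eqref{eq5}, for instance $\min\{v_0,u_2\}$ made smaller or a large negative constant adjusted to $\psi$ on $\partial\Omega$. Let $W$ be the evolution starting from $\underline v$. For $h>0$, the function $W(\cdot,\cdot+h)$ solves \eqref{evprobprincipal} with initial datum $W(\cdot,h)$. Since $\underline v$ is also a parabolic subsolution, comparison gives $W(\cdot,h)\ge \underline v$. Applying comparison again yields $W(x,t+h)\ge W(x,t)$, so $t\mapsto W(x,t)$ is nondecreasing. Symmetrically, starting from a supersolution gives a nonincreasing flow.

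Both monotone flows are uniformly bounded, using the $L^\infty$ bound of \cite{Zheng} and comparison with constants. We then obtain uniform-in-$t$ interior and boundary H\"older or $C^{1,\alpha}$ estimates for the quasilinear parabolic equation under \textbf{(L1)--(L2)}, taken from the Lieberman-type regularity theory. With these, the pointwise monotone limit $W_\infty$ is attained locally uniformly on $\overline\Omega$. Testing the equation on the windows $[t,t+1]$ and letting $t\to\infty$, with $W_t\to0$ in the weak sense, shows that $W_\infty\in\mathcal{WS}$.

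\emph{Step 2: items (i)--(iv) by squeezing.}

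For (i), we sandwich the solution between two flows. The lower one starts from a low subsolution $\underline v\le v_0$, and its limit is a stationary solution lying below $u_2$. Since $u_2$ is minimal in $\mathcal{WS}$, that limit must equal $u_2$. The upper one starts from $u_2$ itself and is therefore constant. Comparison then gives $\underline W\le w\le u_2$, and we conclude.

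For (ii), we take as upper barrier the flow starting from a stationary supersolution trapped between $v_0$ and $\bar u_2$. Its monotone decreasing limit is a solution in $[u_2,\bar u_2)$, and by the definition of $\bar u_2$ this forces the limit to be $u_2$. The lower barrier is $u_2$ itself.

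Items (iii) and (iv) are the mirror images, using the maximality of $u_0$.

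The delicate point in (ii) and (iii) is producing a sub/supersolution strictly between the given solutions, which is required because $v_0$ itself is not stationary. To handle this, we instead use the $\omega$-limit directly. We bound $w$ between the flows from $u_2$ and from $u_0$, and use the Lyapunov identity
\[
\frac{d}{dt}\mathcal{J}_\varepsilon[w(\cdot,t)]=-\int_\Omega w_t^2 .
\]
This identity gives $\int_0^\infty\!\int_\Omega w_t^2<\infty$, so every $\omega$-limit point lies in $\mathcal{WS}$ and is trapped in the order interval. The barrier flows then identify the limit point.

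\emph{Step 3: instability, item (v).} Given $\eta>0$, the non-minimality of $u_1$ yields $v$ with $\|v-u_1\|_\infty<\eta$ and $\mathcal{J}_\varepsilon[v]<\mathcal{J}_\varepsilon[u_1]$. After mollification, which keeps both the strict inequality and the $L^\infty$ closeness, we set $v_0=v$. By the Lyapunov identity, $\mathcal{J}_\varepsilon[w(\cdot,t)]\le \mathcal{J}_\varepsilon[v_0]<\mathcal{J}_\varepsilon[u_1]$ for all $t$. If $w(\cdot,t)\to u_1$, then the uniform gradient bounds give convergence in $W^{1,G}$ along a sequence, hence $\mathcal{J}_\varepsilon[w(\cdot,t_k)]\to\mathcal{J}_\varepsilon[u_1]$, which is a contradiction.

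\emph{Main obstacle.} I expect the main difficulty to be the uniform-in-time regularity. This is the compactness in $C(\overline\Omega)$ and in $W^{1,G}$ needed to pass to the limit in the nonlinear flux $g(|\nabla w|)\nabla w/|\nabla w|$ and in the energy. It is also needed to justify the energy identity rigorously for weak solutions, which I would do via Steklov averages. My first attempt would be to obtain local gradient bounds from the Orlicz parabolic regularity theory and a Minty-type monotonicity argument for the flux.
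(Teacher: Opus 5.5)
\textbf{Gap in items (ii) and (iii).} Your items (i), (iv) and (v) are sound in outline. For (iv), squeezing $w$ between $u_0$ and the nonincreasing flow from a large constant supersolution is simpler than the paper's route, which tries to show that $w$ itself is monotone in $t$ and then defers to Ali--Wang. Item (v) is the paper's Lyapunov argument. The paper justifies the energy inequality through the Maz'ya--Cianchi regularization $G_\tau$ and the $C^1$ convergence $w^{\tau_k}\to w$, where you use Steklov averages. At the last step, lower semicontinuity of $\mathcal{J}_\varepsilon$ together with the energy bound already suffices.

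The gap is the sentence ``the barrier flows then identify the limit point''. Your $\omega$-limit argument only places limit points of $w$ in $\mathcal{WS}$, inside an order interval. Take the sharpest stationary barriers available in (ii): every $u\in\mathcal{WS}$ with $u\ge u_2$, $u\neq u_2$ satisfies $v_0<\bar u_2\le u$, so $w(\cdot,t)\le u$. This still only gives $u_2\le w(\cdot,t)\le \bar u_2$, hence $\omega(v_0)\subset\{u_2,\bar u_2\}$.

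If $\bar u_2\notin\mathcal{WS}$, this yields $u_2$. Nothing in the hypotheses, however, prevents $\bar u_2$ from being a stationary solution; $u_2<\bar u_2$ is compatible with that. In that case your argument cannot exclude $w\to\bar u_2$. Stationary barriers cannot do it, since $\bar u_2$ is itself such a barrier lying above $v_0$. The Lyapunov identity only gives $\mathcal{J}_\varepsilon[\bar u_2]\le\mathcal{J}_\varepsilon[v_0]$, which is no contradiction. Item (iii) has the same hole: you only obtain $\omega(v_0)\subset\{\bar u_0,u_0\}$.

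\textbf{What is missing.} You need a genuinely dynamical input that decides in which direction orbits move inside an order interval whose only equilibria are its endpoints. There are two options.

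\emph{Explicit barriers.} Construct the supersolution $s$ with $v_0\le s\le\bar u_2$, $s\not\equiv\bar u_2$, that you set aside; for (iii), construct a subsolution between $\bar u_0$ and $v_0$.

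\emph{Variational character of the endpoints.} In (ii), use that $u_2$ is a global minimizer. In (iii), use that $u_0\ge\mathrm{m}_\psi>\varepsilon$ makes $u_0$ a local minimizer of $\mathcal{J}_\varepsilon$ in $C^0$: $C^0$-close competitors stay above $\varepsilon$, and $u_0$ minimizes the $G$-Dirichlet energy. Then convert this into attraction. For example, produce a monotone connecting orbit between the endpoints along which $\mathcal{J}_\varepsilon$ strictly decreases, and contradict global minimality of $u_2$, respectively local minimality of $u_0$. This route normally relies on strong comparison, i.e.\ strong monotonicity of the flow, which is itself delicate for the degenerate or singular $g$-Laplacian.

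The paper does not spell this step out either; it refers (i)--(iii) to Ali--Wang. As written, though, your proposal proves (ii) and (iii) only under the additional assumption $\bar u_2\notin\mathcal{WS}$, respectively $\bar u_0\notin\mathcal{WS}$.
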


\begin{proof}
The proof of items (i) through (iv) follows similarly to the $p$-Laplacian case in \cite{HajWang19}, with only minor adjustments. We therefore highlight the modifications required for item (iv), as the remaining cases are entirely analogous.

Since we also have the comparison principle in this setting, we may assume that $v_0$ is smooth. Then, for a constant $\mathrm{M} > 0$, depending on universal parameters and on $v_0$, we have for any nonnegative test function $\varphi$, independent of $t$ and a a sub-domain $V\subset\subset \Omega$ such that $\varphi$ is supported in $V$, that
\[
\int_V \frac{g(|\nabla v_0|)}{|\nabla v_0|} \nabla v_0 \cdot \nabla \varphi \geq \mathrm{M} \int_V \varphi \, .
\]
The continuity of the gradient $\nabla w$ up to $t = 0$, as stated in \cite{Lie93} allows us to obtain
\[
\int_V \frac{g(|\nabla w|)}{|\nabla w|} \nabla w \cdot \nabla \varphi \geq \frac{\mathrm{M}}{2} \int_V \varphi
\]
for any $t \in (0, t_0)$ and any nonnegative function $\varphi$, independent of $t$, supported in $V$, and satisfying the condition
\begin{equation}\label{eq7}
    \frac{\int_V |\nabla \varphi|}{\int_V \varphi} \leq \Lambda,
\end{equation}
for some fixed constant $\Lambda > 0$ and some $t_0 > 0$ depending on $\Lambda$. Then, the subsolution condition for $w$ yields that for any $t_2 > t_1$ in $(0, t_0)$,
\[
\int_V w \varphi \Big|_{t_1}^{t_2} \leq 0,
\]
for any nonnegative function $\varphi$, supported in $V$, independent of $t$, and satisfying condition \eqref{eq7}. Consequently, we obtain
\[
w(x,t_2) \leq w(x,t_1)
\]
for all $x \in \Omega$ and $t_2 \geq t_1 > 0$. Then the Parabolic Comparison Principle \ref{comppri} readily implies $w$ is decreasing in $t$ for $t\in (0,\infty).$
The remainder of the proof follows the same way as in the proof of Theorem 4.1 in \cite{HajWang19}.

Now, we prove the item (v). Given  a $\eta>0$ consider $v_{0}$ such that $\|v_{0}-u_{1}\|_{L^{\infty}(\Omega)}<\eta$ and $\mathcal{J}_{\varepsilon}[v_{0}]<\mathcal{J}_{\varepsilon}[u_{1}].$ Instead of considering the classical regularization of the $p$-Laplacian operator as in \cite{HajWang19}, when dealing with the prototype $g$-Laplacian associated with an $N$-function $G$ satisfying the Lieberman conditions, it is more convenient to use the regularization introduced by Maz'ya and Cianchi in \cite{Cianchi}.
More precisely, we consider a family of standard mollifiers $(\eta_\tau)_{\tau>0}\subset C^{\infty}(\mathbb{R})$ such that 
\[
\int_{\mathbb{R}} \eta_\tau(t)\,dt = 1,
\quad \text{and} \quad
\operatorname{supp}(\eta_\tau) \subset (-\tau,\tau).
\]
Now, as in the Section \ref{Section3}, let \( F(t) := \frac{g(t)}{t} \) and define \( \Phi : \mathbb{R} \to [0,+\infty) \) by
\[
\Phi(s) := F(e^s).
\]
We then define \( \Phi_\tau := \eta_\tau * \Phi \). Lastly, we set
\[
\begin{aligned}
f_\tau:(0,\infty)&\to\mathbb{R},\\
t&\mapsto f_\tau(t)=\Phi_\tau(\log t).
\end{aligned}
\]
Following this procedure, we are able to define the family of N-functions \( \{G_\tau\}_{\tau>0} \) by setting
\[
G_\tau(t) := \int_0^t g_\tau(s)\,ds,
\]
where \( g_\tau(t) := F_\tau(t)\cdot t \) and
\[
F_\tau(t) :=
\frac{f_\tau(\sqrt{\tau + t^2}) + \tau}
{1 + \tau \cdot f_\tau(\sqrt{\tau + t^2})}.
\]
By \cite{Cianchi}, the sequence of functions $(F_{\tau})_{\tau>0}$ satisfies:
\begin{itemize}\label{itens}
\item[(A)] \( F_\tau \in C^\infty(0,+\infty) \);
\item[(B)] \( \tau \leq F_\tau(t) \leq \frac{1}{\tau} \) for all \( t > 0 \);
\item[(C)] \( \min\{\delta_0 - 1,0\} \leq \frac{t \cdot F_\tau'(t)}{F_\tau(t)} \leq \max\{g_0 - 1,0\} \);
\item[(D)] \( \lim\limits_{\tau \to 0} F_\tau(|\xi|)\xi = F(|\xi|)\xi \), uniformly in \( \mathrm{B}_{r} \) for any \( R > 0 \).
\end{itemize}
In particular, \( G_\tau \in C^2[0,\infty) \cap C^\infty(0,+\infty) \), such that \( G_\tau \to G \) and \( g_\tau \to g \) uniformly on compact subsets of \( [0,\infty) \). Moreover, \( g_\tau \) satisfy Lieberman's conditions, with
\begin{equation}\label{conddelieberman}
\min\{\delta_0,1\} \leq \frac{t \cdot g_\tau'(t)}{g_\tau(t)} \leq \max\{g_0,1\}.
\end{equation}
 We conclude that \( G_\tau \) satisfies the same structural conditions as \( G \). Also, \( g_\tau(1) \sim g(1) \) for \( \tau \in (0,1) \) small enough.

 Consider the problem \eqref{evprobprincipal} with $v_{0}$ as the initial data, we may change the value of $v_0$ slightly if necessary so so that there exists $\tau_{0}>0$ sufficiently small such that $v_{0}$ is not a solution of the following regularized problem.
\begin{equation*}
-\Delta_{g_\tau}u+a(x)\beta_{\varepsilon}(u)=0\quad\text{in}\quad\Omega, 
\end{equation*}
for any $\tau \in (0,\tau_{0})$. Let $w^{\tau}$ be the smooth solution of the uniformly parabolic boundary-value problem
\begin{equation*}\label{regprobl}
\begin{cases}
    w_t - \Delta_{g_\tau} w + a(x)\,\beta_\varepsilon(w) = 0 
    & \text{in } \Omega \times (0,\infty), \\[4pt]
    w(x,t) = \psi(x) 
    & \text{on } \partial\Omega \times (0,\infty), \\[4pt]
    w(x,0) = v_0(x) 
    & \text{in } \overline{\Omega}.
\end{cases}\tag{$\mathrm{P}_{\tau}$}
\end{equation*}
We define the functional
 $$\mathcal{J}_{\varepsilon}^{\tau}[u]\defeq \int_\Omega G_\tau(|\nabla u|) + a(x)\Gamma_{\varepsilon}(u)\,dx.$$
 On the other hand, since $w^{\tau}$ is a solution of the problem \eqref{regprobl},  we have
 \begin{equation}\label{5.5}
 \int_0^T\int_\Omega ((w^{\tau})_{t})^2 - \operatorname{div}\left(\frac{g_\tau(|\nabla w^{\tau}|)}{|\nabla w^{\tau}|}\nabla w^{\tau} \right)(w^{\tau})_{t} + a(x)\beta_{\varepsilon}(w^{\tau})(w^{\tau})_t = 0.
 \end{equation}
Using that $(w^{\tau})_t=0$ on $\partial\Omega\times (0,\infty)$, we obtain from \eqref{5.5}, by integration by parts, that
\[
\int_0^T\int_\Omega ((w^{\tau})_{t})^2 + \frac{g_\tau(|\nabla w^{\tau}|)}{|\nabla w^{\tau}|}\nabla w^{\tau}\cdot\nabla w^{\tau} + a(x)\Gamma_{\varepsilon}(w^{\tau})_t = 0,
\]
which implies that
\[
\int_0^T\int_\Omega ((w^{\tau})_t)^2 + \left(G_\tau(|\nabla w^{\tau}|)\right)_t + a(x)\Gamma_{\varepsilon}(w^{\tau})_t = 0.
\]
By integrating over $(0,T)$, we obtain
\begin{align*}
\int_0^T\int_\Omega ((w^{\tau})_t)^2 + \int_\Omega G_\tau(|\nabla w^{\tau}(x,T)|) + \int_\Omega a(x)\Gamma_{\varepsilon}(w^{\tau}(x,T))=\int_\Omega G_\tau(|\nabla w^{\tau}(x,0)|) + \int_\Omega a(x)\Gamma_{\varepsilon}(w^{\tau}(x,0)).
\end{align*}
That is,
$$\int_0^T\int_\Omega((w^{\tau})_t)^2 +\mathcal{J}^{\tau}_{\varepsilon}[w^{\tau}(\cdot,T)]= \mathcal{J}^{\tau}_{\varepsilon}[w^{\tau}(\cdot,0)]=\mathcal{J}^{\tau}_{\varepsilon}[v_{0}],$$
since $w^{\tau}$ is solution to \eqref{regprobl}. This implies that
$$\mathcal{J}^{\tau}_{\varepsilon}[w^{\tau}(\cdot,T)]\leq \mathcal{J}^{\tau}_{\varepsilon}[v_0].
$$
We claim that there exists a sequence $\tau_k \to 0^+$ such that $w^{\tau_k}$ converges to $w$ solution of the problem \eqref{evprobprincipal} in $C^1$ norm for each $\Omega_T$ and $T>0$. The proof strategy is similar to that in \cite{Cianchi} for the elliptic case. First, note that since $u_2 \leq u_1 \leq u_0$, we may choose initial data satisfying $u_2 \leq v_0 \leq u_0$. Then, by the Parabolic Comparison Principle \ref{comppri}, we obtain the bound $u_2\leq w \leq u_0$ in $\Omega\times(0,+\infty)$. Therefore, for all $\tau \in (0,1)$
\[
\|w^{\tau}\|_{L^{\infty}(\Omega_T)} \leq \|u_2\|_{L^{\infty}(\Omega)}+ \|u_0\|_{L^{\infty}(\Omega)}.
\]
Consequently, by the $C^{1,\alpha}$ estimate from \cite{Lie93}, for each $T>0$ there exists a universal constant $\mathrm{C}>0$, independent of $\tau$ (see \eqref{conddelieberman}) such that
\begin{equation}\label{eq 3}
\|\nabla w^{\tau}\|_{C^\alpha(\Omega_T)}\leq \mathrm{C}.
\end{equation}
Therefore, there exists a sequence $\tau_k \to 0^+$ such that $w^{\tau_k} \to \bar{w}$ in $C^1(\Omega_T)$.

On the other hand, by (D) and \eqref{eq 3}, it follows that the sequence
\[
\frac{g_\tau(|\nabla w^{\tau}|)}{|\nabla w^{\tau}|}\nabla w^{\tau}
\]
is bounded in $L^2(0,T;L^G(\Omega))$. Hence, there exist a (not relabeled) subsequence $\tau_k$ and a vector-valued function $\bar{u} \in L^2(0,T;L^G(\Omega))$ such that
\begin{equation}\label{eq 4}
\frac{g_{\tau_k}(|\nabla w^{\tau_k}|)}{|\nabla w^{\tau_k}|}\nabla w^{\tau_k} \rightharpoonup \bar{u}\quad\text{in}\,\,\,L^2(0,T;L^G(\Omega)).
\end{equation}
Since $w^{\tau_k} \to \bar{w}$ and $\nabla w^{\tau_k} \to \nabla \bar{w}$ converges point-wise, we deduce that
\begin{equation}\label{eq 5}
\frac{g(|\nabla \bar{w}|)}{|\nabla \bar{w}|}\nabla \bar{w} = \bar{u}.
\end{equation}
Now, for any test function $\varphi$, we have
$$\int_0^t\int_\Omega\left(-w^{\tau_k}\varphi_t + \frac{g_{\tau_k}(|\nabla w^{\tau_k}|)}{|\nabla w^{\tau_k}|}\nabla w^{\tau_k}\cdot\nabla\varphi + a(x)\beta_{\varepsilon}(w^{\tau_k})\varphi \right)=0. $$
Due to \eqref{eq 4} and \eqref{eq 5}, by passing to the limit as $\tau_k \to 0^+$, we can deduce that
$$
\int_0^T\int_\Omega\left(-\bar{w}\varphi_t + \frac{g(|\nabla \bar{w}|)}{|\nabla \bar{w}|}\nabla \bar{w}\cdot\nabla\varphi + a(x)\beta_{\varepsilon}(\bar{w})\varphi \right)=0,
$$
and the boundary condition implies that $\bar{w}$ is a solution of
 \[
\begin{cases}
    w_t - \Delta_{g_\tau} w + a(x)\,\beta_\varepsilon(w) = 0 
    & \text{in } \Omega_T:=\Omega \times (0,T), \\[4pt]
    w(x,t) = \psi(x) 
    & \text{on } \partial\Omega \times (0,T), \\[4pt]
    w(x,0) = v_0(x) 
    & \text{in } \overline{\Omega}.
\end{cases}
\]
By uniqueness (Corollary \ref{Uniquenesscor}), it follows that $\bar{w} = w$ in $\Omega \times (0,T)$ for every $T>0$, which completes the proof of the claim.

Therefore $\mathcal{J}_{\varepsilon}[w(\cdot, t)]\leq \mathcal{J}_{\varepsilon}[v_0]< \mathcal{J}_{\varepsilon}[u_1].$
In conclusion, $w(\cdot, t)$ does not converge to $u_1$ as $t\to \infty.$
\end{proof}


\subsection*{Acknowledgments}

\hspace{0.4cm} J.S. Bessa has been supported by FAPESP-Brazil under Grant No. 2023/18447-3.

\end{document}